\newtheorem{theorem}{Theorem}[section]
\newtheorem{lemma}[theorem]{Lemma}
\newtheorem{example}[theorem]{Example}
\newtheorem{question}{Problem} 
\newcommand{\e}{\eta}
\newcommand{\scr}{\mathscr}
      \def\@setcopyright{}
      \def\serieslogo@{}
\begin{document}

   \author{Amin Bahmanian}
   \address{Department of Mathematics and Statistics
 Auburn University, Auburn, AL USA   36849-5310}
   \email{mzb0004@tigermail.auburn.edu}

   \author{  C. A. Rodger}

   \address{Department of Mathematics and Statistics
 Auburn University, Auburn, AL USA   36849-5310}

   \email{rodgec1@auburn.edu}

   \title[What are graph amalgamations?]{What are graph amalgamations?}

   \begin{abstract}
     In this paper, a  survey about recent progress on problems solved using graph amalgamations is presented, along with some new results with complete proofs, and some related open problems.
   \end{abstract}

   \keywords{Amalgamations, Detachments, Hamiltonian Decomposition, Edge-coloring, Hamiltonian Cycles, Factorization, Embedding}

   \dedicatory{Dedicated to Lucia Gionfriddo, a young star, the memory of whom will
shine brightly in the minds of we who knew her.}

   \date{\today}

   \maketitle
\section{Introduction} 
Edouard Lucas (1842--1891), the inventor of the Towers of Hanoi problem, discussed the \textit{probl\'eme de ronde } that asked the following \cite{L}:
Given $2n+1$ people, is it possible to arrange them around a single table on $n$ successive nights so that nobody is seated next to the same person on either side more than once? This problem is equivalent to a Hamiltonian decomposition of $K_{2n+1}$; that is partitioning the edge set of $K_{2n+1}$ into spanning cycles.  A solution to this problem for $n=3$ is illustrated in Figure \ref{walecki}, which is due to Walecki. This can be easily generalized to any complete graph by ``rotating'' an initial cycle. 
\begin{figure}[htbp]
\begin{center}
\scalebox{.45}{ \input {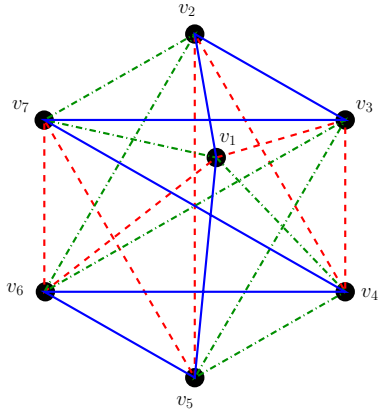} }
\caption{Walecki Construction  }
\label{walecki}
\end{center}
\end{figure} 

In 1984, Hilton \cite{H2} suggested a different approach to solving this problem, one of which is useful for solving another family of problems as well. He first fused  all the vertices  of $K_n$ (this is called amalgamation)  which results in having $\binom{n}{2}$ loops incident with a vertex. Then he shared the loops evenly between different color classes. (In this paper, the $i^{th}$ color class of $G$ is defined to  be the spanning subgraph of $G$ that contains precisely the edges colored $i$.) Finally he reversed the fusion by splitting the single vertex into $n$ vertices (this is called detachment), so that each color class is a Hamiltonian cycle. This is illustrated in Figure \ref{hilton} for $K_7$. 
\begin{figure}[htbp]
\begin{center}
\scalebox{.55}{ \input {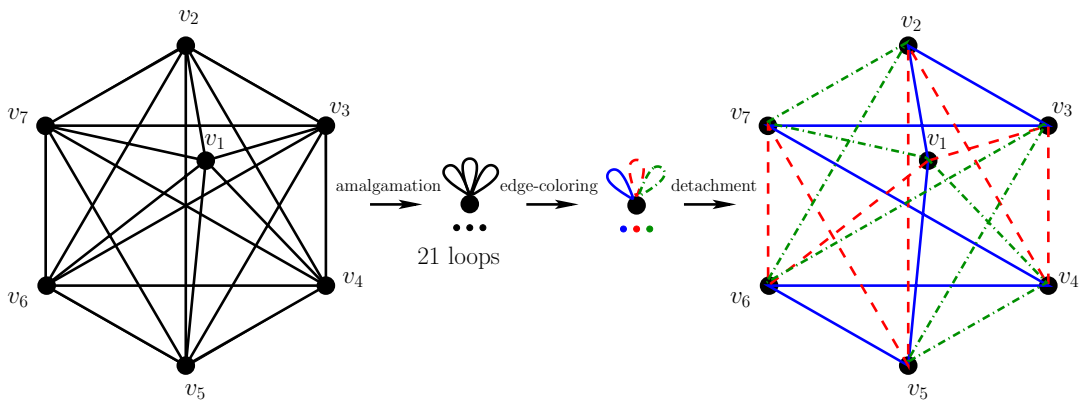} }
\caption{Hamiltonian decomposition of $K_7$ }
\label{hilton}
\end{center}
\end{figure} It is not obvious how we can detach the loops so that each color class is a Hamiltonian cycle. The second problem that Hilton solved was an embedding problem \cite{H2}. Given an edge-coloring of $K_m$, in which each color class is a path, he used amalgamations to extend this coloring to  an edge-coloring of $K_{m+n}$, so that each color class is a Hamiltonian cycle in $K_{m+n}$ (so $m+n$ must be odd). The idea is to add a new vertex, say $u$ to $K_m$ incident with $\binom{n}{2}$ loops so that there are $n$ edges between this vertex and every other vertex. Let us call this graph $K_m^+$. (In fact $K_m^+$ is an amalgamation of $K_{m+n}$ in which all further $n$ vertices are contracted in one point.) One can easily color all the edges incident with $u$ so that the valency of $u$ for each color class is exactly $2n$. Finally by detaching $u$ into $n$ vertices, say $u_1,\ldots,u_n$ and sharing the edges of each color class incident with $u$ among $u_1,\dots, u_n$ as evenly as possible and ensuring that each color class is connected, provides the desired outcome: a Hamiltonian decomposition of $K_{m+n}$. 
This is illustrated for $m=5,n=2$ in Figure \ref{hiltonembed}. 
\begin{figure}[htbp]
\begin{center}
\scalebox{.45}{ \input {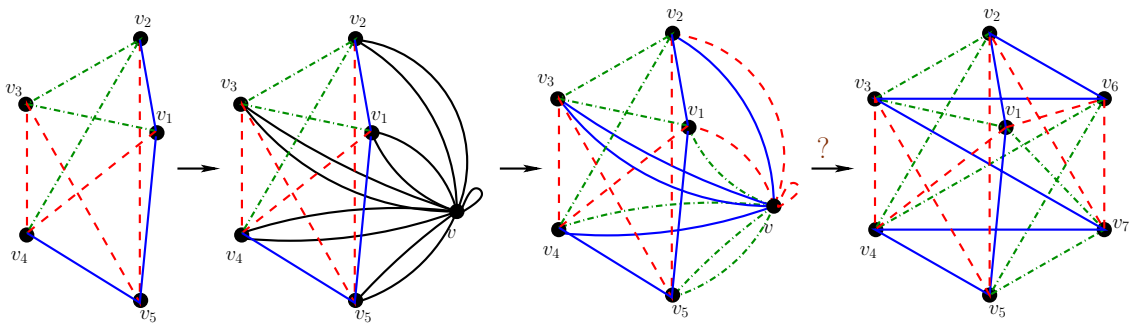} }
\caption{Embedding a path decomposition of $K_5$ into a Hamiltonian decomposition of $K_7$}
\label{hiltonembed}
\end{center}
\end{figure} To provide more explanation, first we give some definitions. 

A graph $H$ is an \textit{amalgamation} of a graph $G$  if, there exists a function $\phi$ called an \textit{amalgamation function} from $V(G)$ onto $V(H)$ and a bijection $\phi':E(G)\rightarrow E(H)$ such that $e$ joining $u$ and $v$ is in $E(G)$ if and only if $\phi'(e)$ joining $\phi(u)$ and $\phi(v)$ is in $E(H)$; we write $\phi(G)=H$. In particular, this requires that $e$ be a loop in $H$ if and only if, in $G$, it either is a loop or joins distinct vertices $u,v$, such that $\phi(u) = \phi(v)$. 
(Note that $\phi'$ is completely determined by $\phi$.) Associated with $\phi$ is the \textit {number function}  $\eta:V(H)\rightarrow \mathbb N$ defined by $\eta(v)=|\phi^{-1}(v)|$, for each $v\in V(H)$. We also shall say that $G$ is a \textit{detachment} of $H$ in which each vertex $v$ of $H$ splits (with respect to $\phi$) into the vertices in $\phi^{-1}(\{v\})$ (see Figure \ref{k22221hd1label}). 
A \textit{detachment} of $H$ is, intuitively speaking, a graph obtained from $H$ by splitting some or all of its vertices into more than one vertex. If $\eta$ is a function from $V(H)$ into $\mathbb {N}$ (the set of positive integers), then an \textit{$\eta$-detachment} of $H$ is a detachment of $H$ in which each vertex $u$ of $H$ splits into $\eta(u)$ vertices. In other words, $G$ is an $\eta$-detachment  of $H$ if  there exists an amalgamation function $\phi$ of $G$ onto $H$ such that $|\phi^{-1}(\{u\})|=\eta(u)$ for every $u\in V(H)$. 
\begin{figure}[htbp]
\begin{center}
\scalebox{.35}{ \input {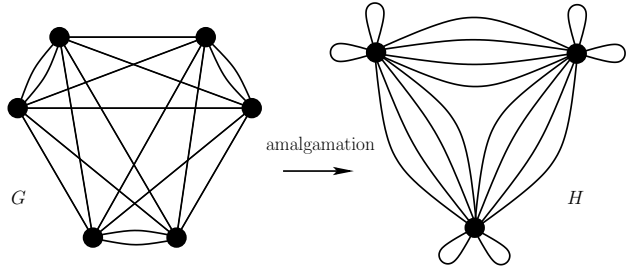} }
\caption{A graph $G$ with one of its amalgamations $H$ }
\label{k22221hd1label}
\end{center}
\end{figure} 
In this paper $x\approx y$ means $\lfloor y \rfloor \leq x\leq \lceil y \rceil$, $\ell(u)$ denotes the number of loops incident with vertex $u$, $d(u)$ denotes the degree of vertex $u$ (loops are considered to contribute two to the degree of the incident vertex), the subgraph of $G$ induced by the edges colored $j$ is denoted by $G(j)$, $\omega (G)$ is the number of components of $G$, and the \textit {multiplicity} of a pair of vertices  $u,v$ of $G$, denoted by $m(u,v)$, is the number of edges joining $u$ and $v$ in $G$. A \textit{k-edge-coloring} of a graph $G$ is a mapping $f:E(G)\rightarrow C$, where $C$ is a set of $k$ \textit{colors} (often we use $C=\{1,\ldots,k\}$). It is often convenient to have empty color classes, so we do not require $f$ to be surjective.

One of the most useful properties that one can obtain using the techniques described here, is that many graph parameters (such as colors, degrees, multiple edges) can be simultaneously shared evenly during the detachment process. This is often the most desirable property. 

\begin{theorem}\textup{(Bahmanian, Rodger \cite[Theorem 1]{BahRod1})}\label{mainth}
Let $H$ be a $k$-edge-colored graph  and let $\eta$ be a function from $V(H)$ into $\mathbb{N}$ such that for each $v \in V(H)$, $\eta (v) = 1$ implies $\ell_H (v) = 0$. Then there exists a loopless $\eta$-detachment $G$ of $H$  in which each $v\in V(H)$ is detached into $v_1,\ldots, v_{\eta(v)}$, such that $G$ satisfies the following conditions:
\begin{itemize}
\item [\textup{(A1)}] $d_G(u_i) \approx d_H(u)/\eta (u)  $ for each $u\in V(H)$ and  $1\leq i\leq \eta(u);$
\item [\textup{(A2)}] $d_{G(j)}(u_i) \approx d_{H(j)}(u)/\eta (u)  $ for each $u\in V(H)$,  $1\leq i\leq \eta(u)$, and $1\leq j\leq k;$
\item [\textup{(A3)}] $m_G(u_i, u_{i'}) \approx \ell_H(u)/\binom {\eta(u)}{2} $ for each $u\in V(H)$ with $\eta (u) \geq 2$ and $1\leq i<i'\leq \eta(u);$
\item [\textup{(A4)}] $m_{G(j)} (u_i, u_{i'}) \approx  \ell_{H(j)}(u)/\binom{\eta (u)}{2}  $ for each $u\in V(H)$ with $\eta (u) \geq 2$, $1\leq i<i'\leq \eta(u)$, and $1\leq j\leq k;$
\item [\textup{(A5)}] $m_G(u_i, v_{i'}) \approx m_H(u, v)/(\eta (u) \eta (v)) $ for every pair of distinct vertices $u,v\in V(H)$, $1\leq i\leq \eta(u)$, and $1\leq i'\leq \eta(v);$
\item [\textup{(A6)}] $m_{G(j)}(u_i, v_{i'}) \approx m_{H(j)}(u, v)/(\eta (u)\eta (v))  $ for every pair of distinct vertices $u,v\in V(H)$, $1\leq i\leq \eta(u)$, $1\leq i'\leq \eta(v)$, and $1\leq j\leq k;$
\item [\textup{(A7)}] If for some $j$, $1\leq j\leq k$, $d_{H(j)}(u)/\eta (u)$ is even for each $u \in V(H)$, then $\omega(G(j)) = \omega(H(j))$.
\end{itemize}
\end{theorem}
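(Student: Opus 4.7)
I would proceed by induction on $N:=\sum_{v\in V(H)}(\eta(v)-1)$, the total number of new vertices to be created. When $N=0$ we have $\eta\equiv 1$, so by hypothesis $H$ is loopless and $G:=H$ vacuously satisfies (A1)--(A7). For the inductive step, choose any $u\in V(H)$ with $\eta(u)\geq 2$ and perform a single split-off at $u$ to produce an intermediate multigraph $H^{*}$, namely the $\eta^{*}$-detachment of $H$ with $\eta^{*}(u)=2$ and $\eta^{*}(w)=1$ for $w\neq u$. Write $u^{*},u'$ for the two copies of $u$ in $H^{*}$. Define a new number function $\eta''$ on $V(H^{*})$ by $\eta''(u^{*})=1$, $\eta''(u')=\eta(u)-1$, and $\eta''(w)=\eta(w)$ otherwise, and apply the inductive hypothesis to $(H^{*},\eta'')$. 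Since $\sum_{w}(\eta''(w)-1)=N-1$, this produces the required detachment $G$ of $H^{*}$, and hence of $H$ after relabelling $u^{*}$ as $u_{\eta(u)}$. A bookkeeping check, using that nested ``approximately equal'' distributions compose, shows that provided $u^{*}$ receives the correct share of the edges at $u$ in the split-off step, conditions (A1)--(A7) for $(H^{*},\eta'')$ imply the same conditions for $(H,\eta)$.

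\textbf{Main obstacle.} The crux is executing the single split-off. Each non-loop edge at $u$ must be assigned either to $u^{*}$ or to $u'$, and each loop at $u$ must be replaced either by an edge $u^{*}u'$ or by a loop at $u'$ (no loop may remain at $u^{*}$, since $u^{*}$ will not be split further and $G$ must be loopless). The assignment must simultaneously satisfy, for every colour $j\in\{1,\ldots,k\}$ and every $v\in V(H)\setminus\{u\}$,
\[
m_{H^{*}(j)}(u^{*},v)\approx \frac{m_{H(j)}(u,v)}{\eta(u)},\qquad
m_{H^{*}(j)}(u^{*},u')\approx \frac{2\,\ell_{H(j)}(u)}{\eta(u)},
\]
together with the uncoloured analogs and the degree identity $d_{H^{*}(j)}(u^{*})\approx d_{H(j)}(u)/\eta(u)$. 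Because a loop at $u$ contributes two to the degree but becomes a single edge $u^{*}u'$, the loop and non-loop constraints are entangled and cannot be decided edge by edge. The natural resolution is to build an auxiliary bipartite multigraph on two destination vertices $u^{*},u'$ whose edges encode the endpoint-slots at $u$ (each loop contributing two slots), and then invoke a de Werra-type equitable edge-colouring result on this auxiliary graph to obtain a balanced $1:(\eta(u)-1)$ split simultaneously respecting every colour class and every neighbourhood. Transferring this colouring back to $H$ yields the required assignment.

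\textbf{Condition (A7).} When $d_{H(j)}(u)/\eta(u)$ is even for every $u$, the split-off can be additionally tuned so that the colour-$j$ edges assigned to $u^{*}$ pair up through $u^{*}$; this pairing is a routing of edges through $u^{*}$ that preserves every walk of $H(j)$ passing through $u$, yielding $\omega(H^{*}(j))=\omega(H(j))$. Induction then gives $\omega(G(j))=\omega(H(j))$, completing the proof.
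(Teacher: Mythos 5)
Your overall strategy --- peel off one new vertex at a time, decide the fate of the edge-ends at $u$ by a de~Werra-type balanced/equitable edge-colouring of an auxiliary graph, and compose the resulting approximations --- is the strategy behind the proof in \cite{BahRod1} that this survey points to. (The survey does not reprove Theorem~\ref{mainth}; it only records that the proof rests on Theorem~\ref{BEE}, equivalently on Nash-Williams' laminar lemma, Lemma~\ref{laminarlem}.) Your reduction to a single split-off, the target quantities $m_{H^{*}(j)}(u^{*},v)\approx m_{H(j)}(u,v)/\eta(u)$ and $m_{H^{*}(j)}(u^{*},u')\approx 2\ell_{H(j)}(u)/\eta(u)$, and the identity $\lfloor\lfloor a/m\rfloor/n\rfloor=\lfloor a/(mn)\rfloor$ underlying the ``nested approximations compose'' step are all the right bookkeeping, and that part does go through.

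The difficulty is that the two steps you yourself identify as the crux are asserted rather than carried out, and as literally described they do not yet work. For the split-off: if the auxiliary object has the $d_H(u)$ endpoint-slots at $u$ as its edges and you ask for an equitable $1:(\eta(u)-1)$ split of slots, nothing prevents both slots of a single loop from being sent to $u^{*}$, which creates the forbidden loop there and simultaneously breaks the counts of $u^{*}u'$ edges and of loops surviving at $u'$. The known repair is to make each loop's slot-pair itself one of the sets that must be split evenly --- then at most $\lceil 2/\eta(u)\rceil=1$ of its ends is selected when $\eta(u)\geq 2$ --- and more generally to encode \emph{all} of the constraints (per colour, per neighbour, per colour-and-neighbour, the loops, and the total degree) as the sets of two laminar families on the slots, or equivalently as the classes controlled by a balanced, equitable, equalized colouring of one carefully built bipartite graph. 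Exhibiting that encoding and checking that it is internally consistent is the actual content of the theorem, and it is absent from your sketch. For (A7), ``additionally tuned so that the colour-$j$ edges at $u^{*}$ pair up'' is the Euler-tour idea, but the pairing must be imposed \emph{inside} the same auxiliary structure (e.g., by treating consecutive edge-pairs of an Euler tour of $H(j)$ at $u$ as single indivisible objects to be distributed), because the whole point is to show that the connectivity requirement is simultaneously satisfiable with all the balance requirements; claiming it can be tuned afterwards is exactly the step that needs proof. You should also record why the hypothesis of (A7) persists down the induction, namely that $d_{H^{*}(j)}(u^{*})=d_{H(j)}(u)/\eta(u)$ exactly (the target being an even integer), so $d_{H^{*}(j)}(u')/\eta''(u')$ is again even.
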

The proof uses edge-coloring techniques.  An edge-coloring of a multigraph is (i) \textit{equalized} if the number of edges colored with any two colors differs by at most one, (ii) \textit{balanced} if for each pair of vertices, among the edges joining the pair, the number of edges of each color differs by at most one from the number of edges of each other color, and (iii) \textit{equitable} if, among the edges incident with each vertex, the number of edges of each color differs by at most one from the number of edges of each other color. 
In \cite{deW71, deW71-2, deW75, deW75-2} de Werra studied balanced equitable edge-coloring of bipartite graphs. The following result is used to prove Theorem \ref{mainth}. 
\begin{theorem} \label{BEE}
Every bipartite graph has a balanced, equitable and equalized $k$-edge-coloring for each $k\in \mathbb N$. 
\end{theorem}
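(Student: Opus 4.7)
The plan is to proceed by induction on $k$. The base case $k=1$ is immediate: all edges receive the single color, and every condition is vacuous. For $k\geq 2$, I would reduce the inductive step to the following splitting lemma: every bipartite multigraph $G$ contains a subgraph $F$ such that $|E(F)|$ is $\lfloor |E(G)|/k\rfloor$ or $\lceil |E(G)|/k\rceil$; $d_F(v)$ is $\lfloor d_G(v)/k\rfloor$ or $\lceil d_G(v)/k\rceil$ for every $v\in V(G)$; and $m_F(u,v)$ is $\lfloor m_G(u,v)/k\rfloor$ or $\lceil m_G(u,v)/k\rceil$ for every pair $u,v\in V(G)$.

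Given the lemma, I would declare $F$ to be color class $1$ and apply the inductive hypothesis to $G\setminus F$ to obtain a balanced, equitable, equalized $(k-1)$-edge-coloring with colors $2,\dots,k$. To confirm that the resulting $k$-coloring satisfies the three conditions, a short case analysis suffices: writing $d_G(v) = qk + r$ with $0\leq r < k$ and considering both allowed values of $d_F(v)$, one checks that the inductive range $\{\lfloor d_{G\setminus F}(v)/(k-1)\rfloor, \lceil d_{G\setminus F}(v)/(k-1)\rceil\}$ for colors $2,\dots,k$ sits inside the target range $\{\lfloor d_G(v)/k\rfloor, \lceil d_G(v)/k\rceil\}$ (the boundary cases $r=0$ and $r=1$ need to be treated separately but behave well); the same short calculation handles the pair multiplicities and the total edge counts.

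For the splitting lemma itself, my plan is a Kempe-type alternating-path argument that exploits bipartiteness. I would start with an arbitrary candidate $F\subseteq E(G)$ already satisfying the multiplicity condition (easy to arrange: for each pair $u,v$ independently, select $\lfloor m_G(u,v)/k\rfloor$ or $\lceil m_G(u,v)/k\rceil$ of the parallel edges) and then repair violations of the degree and size conditions by toggling membership in $F$ along alternating walks in the symmetric difference of $F$ and $E(G)\setminus F$. Because $G$ is bipartite, every such alternating subgraph decomposes into paths and even cycles, so the swaps are well-defined and admit a potential-function analysis: a suitable sum of violations of the degree and size conditions strictly decreases with each appropriate swap, guaranteeing termination.

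The hard part will be ensuring that each Kempe swap does not destroy the already-established multiplicity condition. A swap along an alternating cycle leaves all pair multiplicities unchanged and is therefore always safe, but a swap along an alternating path can alter the multiplicity at the two endpoint pairs. The technical heart of the argument is to show that whenever a degree or size violation remains, one can find either a cycle swap, or a path swap whose endpoint pairs have slack in the multiplicity bounds, so that the potential strictly decreases without creating a new violation. Chaining several swaps may be required, and the proof must keep careful track of how parities of $d_G(v)$ and $m_G(u,v)$ modulo $k$ interact with the available slack.
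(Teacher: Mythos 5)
Your recursion is structurally the same as the paper's: peel off one color class $F$ with $|E(F)|\approx |E(G)|/k$, $d_F(v)\approx d_G(v)/k$ and $m_F(u,v)\approx m_G(u,v)/k$, then handle $G\setminus F$ with $k-1$ colors, and your floor/ceiling case analysis for why the combined coloring stays in the target ranges is correct. The genuine gap is that your splitting lemma --- which carries the entire content of the theorem --- is not actually proved. Your Kempe-chain plan starts from a set satisfying only the multiplicity constraints and proposes to repair degree and size violations by toggling along alternating walks, but the step you yourself label ``the technical heart'' (that whenever a violation remains there is a cycle swap, or a path swap whose endpoint pairs have multiplicity slack, strictly decreasing a potential without creating new violations) is asserted rather than established. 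It is not routine: a path swap changes $d_F$ by $\pm 1$ at both endpoints and changes $|E(F)|$ by $0$ or $\pm 1$ depending on the parity of the walk, so you must show that corrections at different vertices, at the global edge count, and at the parallel classes can always be reconciled simultaneously; no potential function is specified and no existence argument for an admissible swap is given. As written, the lemma --- and hence the theorem --- remains unproved.

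The quickest repair is to observe that your splitting lemma is exactly Lemma \ref{laminarlem} applied to $S=E(G)$ with the two laminar families the paper uses: for each side $V_i$ of the bipartition, take the parallel classes $E(u,v)$, the stars of the vertices of $V_i$ (pairwise disjoint, since every edge has exactly one end in $V_i$, and each containing the parallel classes it meets), and $E(G)$ itself; the set $A$ obtained with $n=k$ is your $F$, and applying the lemma with $n=k, k-1, \dots$ in turn is precisely the paper's proof. If instead you insist on a self-contained alternating-walk argument, you must construct the potential explicitly and prove the existence of an improving swap in every non-terminal configuration; that is where all the work of the theorem lies.
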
  
Here we show that this result is simply a consequence of Nash-Williams lemma. A family $\scr A$ of sets is \textit{laminar} if, for every pair $A, B$ of sets belonging to $\scr A$, either $A\subset B$, or $B\subset A$, or $A\cap B=\varnothing$. 

\begin{lemma}\textup{(Nash-Williams \cite[Lemma 2]{Nash87})}\label{laminarlem}
If $\scr A, \scr B$ are two laminar families of subsets of a finite set $S$, and $n\in \mathbb N$, then there exist a subset $A$ of $S$ such that for every $P\in \scr A \cup \scr B$,  $|A\cap P|\approx |P|/n$. 
\end{lemma}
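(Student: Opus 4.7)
The plan is to recast the claim as an integer feasibility question. Let $x = \chi_A \in \{0,1\}^S$ denote the indicator of the sought set; the condition $|A \cap P| \approx |P|/n$ is then
$$\left\lfloor \tfrac{|P|}{n} \right\rfloor \le \sum_{s \in P} x_s \le \left\lceil \tfrac{|P|}{n} \right\rceil \qquad \text{for every } P \in \mathscr{A} \cup \mathscr{B}.$$
The starting observation is that the uniform fractional vector $x_s \equiv 1/n$ satisfies every one of these inequalities with equality, since $\sum_{s \in P}(1/n)= |P|/n$ is bracketed by its own floor and ceiling. Hence the polytope
$$\mathcal{P} = \bigl\{\, x \in [0,1]^S : \lfloor |P|/n \rfloor \le \textstyle\sum_{s \in P} x_s \le \lceil |P|/n \rceil \text{ for all } P \in \mathscr{A} \cup \mathscr{B}\, \bigr\}$$
is nonempty, and the lemma reduces to exhibiting an integer point of $\mathcal{P}$.

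Next I would exploit the forest structure of each laminar family. A depth-first traversal of the laminar forest of $\mathscr{A}$ gives a linear order of $S$ in which every $P \in \mathscr{A}$ occupies a consecutive block of positions, and the analogous statement holds for $\mathscr{B}$ under a (possibly different) order. Each family alone therefore has an interval incidence matrix, which is totally unimodular, so each family in isolation allows its LP relaxation to be rounded to an integer solution by standard LP theory.

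The main obstacle is reconciling the two families, since two interval matrices using different column orderings need not combine into a totally unimodular matrix. My plan to overcome this is iterative rounding. Beginning at $x_s = 1/n$, I would repeatedly select a nonzero direction $y \in \mathbb{R}^S$ supported on the currently fractional coordinates and lying in the kernel of all currently tight constraints, and translate $x \to x \pm \varepsilon y$ until either a new coordinate becomes integral or a new bound $\lfloor |P|/n \rfloor$, $\lceil |P|/n \rceil$ becomes tight. The crux is checking that such a $y$ exists as long as some coordinate is fractional; this rests on a rank/uncrossing argument applied jointly to the two laminar forests restricted to the $k$ currently fractional coordinates, showing that the combined tight constraints cannot span $\mathbb{R}^k$.

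Since each iteration strictly increases either the number of rounded coordinates or the rank of the tight-constraint system, the procedure terminates at an integer point $x \in \mathcal{P}$. Setting $A = \{s \in S : x_s = 1\}$ then yields $|A \cap P| \approx |P|/n$ for every $P \in \mathscr{A} \cup \mathscr{B}$, as required.
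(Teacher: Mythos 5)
The paper offers no proof of this lemma to compare against: it is quoted verbatim from Nash-Williams, whose own argument is an elementary induction on the laminar structure. Your proposal must therefore stand on its own, and it has a genuine gap at precisely the step you flag as the crux. The assertion that ``the combined tight constraints cannot span $\mathbb{R}^k$'' is false as a statement about the two restricted laminar forests alone. Take $S=\{1,2,3\}$, $\mathscr A=\{\{1,2\},\{1,2,3\}\}$, $\mathscr B=\{\{2,3\}\}$: every set has at least two elements (so no coordinate is forced integral by a tight singleton), yet the three incidence vectors $(1,1,0)$, $(1,1,1)$, $(0,1,1)$ span $\mathbb{R}^3$. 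No rank, counting, or uncrossing argument on the forests can exclude this configuration from the set of tight constraints; what actually prevents all three from being tight at a fractional point is that the corresponding $3\times 3$ system with integer right-hand sides has determinant $\pm 1$ and hence a unique, integral solution. In other words, the missing step is an integrality (determinant) argument, not a rank argument, and as written your iteration could stall.

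The ingredient that repairs this is exactly the fact you set aside as doubtful: the row-incidence matrix of the union of two laminar families \emph{is} totally unimodular. By the Ghouila--Houri criterion, given any subset of rows, sign each chosen set of $\mathscr A$ (respectively $\mathscr B$) by the parity of the number of chosen sets of the same family properly containing it; for a fixed element of $S$ the chosen sets of one family containing it form a chain along which these signs alternate, so each family's signed sum of incidence vectors lies in $\{0,1\}^S$, and the difference of the two signed sums lies in $\{-1,0,1\}^S$. Once total unimodularity is in hand, your polytope $\mathcal{P}$ is nonempty (it contains the point with all coordinates $1/n$), bounded, and defined by integral bounds, so it has an integral vertex, and the entire iterative-rounding scaffolding becomes unnecessary; alternatively, one can bypass linear programming altogether and induct directly on the laminar families as Nash-Williams does.
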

\noindent{\it Proof of Theorem \ref{BEE}. } 
Let $B$ be a bipartite graph with vertex bipartition $\{V_1, V_2\}$. For $i=1, 2$ define the laminar set $L_i$ to consist of the
following sets of subsets of edges of $B$: (i) The edges between each pair of vertices $v_1 \in V_1$ and $v_2 \in V_2$, (ii) For each $v \in V_i$, the edges incident with $v$, (iii) All the edges in $B$. 
Applying Lemma  \ref{laminarlem} with $n = k$ provides one color class. Remove these
edges then reapply Lemma  \ref{laminarlem}, with $n = k-1$ to get the second class.
Recursively proceeding in this way provides the $k$-edge-coloring of $B$. It is
straightforward to see that this produces the result by observing that 
the edges in subsets defined in (i), (ii) and (iii) guarantee that
the $k$-edge-coloring is balanced, equitable, and equalized
respectively. 
\qed

\section {Applications}
In this section we demonstrate the power of Theorem \ref{mainth}. The results are not new, and many follow from earlier, more restrictive versions of Theorem \ref{mainth}. But the point of this section is to give the reader a feel for how amalgamations can be used. 
\begin{theorem} \textup{(Walecki \cite{L})}\label{knhdthm}
$\lambda K_n$ is Hamiltonian decomposable (with a 1-factor leave, respectively) if and only if $\lambda (n-1)$ is even (odd, respectively).
\end{theorem}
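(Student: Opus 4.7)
The plan is to treat necessity and sufficiency separately. Necessity follows from a routine degree-count; sufficiency is the substantive part and falls out from a single application of Theorem~\ref{mainth}.

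For necessity, every color class in a Hamiltonian decomposition is a 2-regular spanning subgraph, so $\lambda(n-1)=d_{\lambda K_n}(v)$ must be even. In the 1-factor leave version, removing a perfect matching (which already requires $n$ even) from $\lambda K_n$ yields a $(\lambda(n-1)-1)$-regular graph decomposable into Hamiltonian cycles, so $\lambda(n-1)-1$ is even, forcing $\lambda(n-1)$ odd.

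For sufficiency, let $H$ be the single-vertex graph consisting of a vertex $u$ carrying $\lambda\binom{n}{2}$ loops, and set $\eta(u)=n$. Any $\eta$-detachment $G$ produced by Theorem~\ref{mainth} is loopless (vacuously, since $\eta(u)\geq 2$), and by (A3) satisfies $m_G(u_i,u_{i'})\approx \lambda\binom{n}{2}/\binom{n}{2}=\lambda$, so $G=\lambda K_n$ identically. It remains to edge-color $H$ so that the detachment conditions force each color class of $G$ to be a Hamiltonian cycle. When $\lambda(n-1)$ is even, partition the loops of $H$ into $k=\lambda(n-1)/2$ color classes of $n$ loops each, so $d_{H(j)}(u)=2n$ for every $j$. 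Condition (A2) then gives $d_{G(j)}(u_i)\approx 2n/n=2$, so every color class of $G$ is 2-regular; since $d_{H(j)}(u)/\eta(u)=2$ is even, (A7) forces $\omega(G(j))=\omega(H(j))=1$, and a connected 2-regular simple graph on $n$ vertices is a Hamiltonian cycle.

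When $\lambda(n-1)$ is odd (so $n$ is even), reserve $n/2$ loops for a distinguished color $0$ and partition the remaining $n(\lambda(n-1)-1)/2$ loops into $(\lambda(n-1)-1)/2$ classes of $n$ loops each. The argument above still shows that the non-zero color classes become Hamiltonian cycles, while for color $0$ condition (A2) gives $d_{G(0)}(u_i)\approx n/n=1$, so $G(0)$ is a 1-regular loopless graph on $n$ vertices, i.e., a perfect matching, the required 1-factor leave.

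The main obstacle in such amalgamation arguments is usually verifying connectivity of each color class after detachment; here that step is handled automatically by (A7), since we arrange $d_{H(j)}(u)/\eta(u)=2$ to be even. Every other verification collapses to the observation that each ratio in (A2)--(A5) is already an integer under the uniform choice $\eta(u)=n$, so the $\approx$ symbols become equalities and the routine bookkeeping disappears.
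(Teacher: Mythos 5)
Your proposal is correct and follows essentially the same route as the paper: amalgamate $\lambda K_n$ to a single vertex with $\lambda\binom{n}{2}$ loops, color the loops into classes of $n$ loops (plus one class of $n/2$ loops when $\lambda(n-1)$ is odd), and invoke Theorem~\ref{mainth}, with (A3) recovering $\lambda K_n$, (A2) giving $2$-regularity (resp.\ $1$-regularity), and (A7) giving connectivity. The only difference is that you spell out the necessity argument and the verification of each condition, which the paper leaves as ``obvious'' or implicit.
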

\begin{proof} The necessity is obvious. To prove the sufficiency, let $H$ be a graph with $V(H)=\{v\}$, $\ell(v)=\lambda \binom{n}{2}$ and $\eta(v)=n$ , and let $k=\lfloor\lambda (n-1)/2\rfloor$. Color the loops so that $\ell_{H(j)}(v)=n$, for $1\leq j\leq k$ (and $\ell_{H(k+1)}(v)=n/2$, if $\lambda (n-1)$ is odd).  Applying Theorem \ref{mainth}  completes the proof. 
\end {proof}
The following result is essentially proved in \cite{H2}, but the result is stated in less general terms. 
\begin{theorem} \textup{(Hilton \cite{H2})} \label{knhdthmembd}
A $k$-edge-colored $K_m$ can be embedded into a Hamiltonian decomposition of $K_{m+n}$ (with a 1-factor leave, respectively) if and only if $(m+n-1)$ is even (odd, respectively), $k=\lceil(m+n-1)/2\rceil$, and each color class of $K_m$ (except one color class, say $k$, respectively) is a collection of at most $n$ disjoint paths, (color class $k$ consists of paths of length at most 1, at most $n$ of which are of length 0, respectively), where isolated vertices in each color class  are to be counted as paths of length 0. 
\end{theorem}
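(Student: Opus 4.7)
The plan is to mirror Hilton's original construction as described in the introduction, but to carry it out cleanly by packaging all the detachment work into a single application of Theorem \ref{mainth}.

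For necessity, observe that if the edge-colored $K_m$ sits inside a Hamiltonian decomposition of $K_{m+n}$, then restricting any Hamiltonian cycle $C$ of $K_{m+n}$ to the vertex set of $K_m$ breaks $C$ at each of the (at most) $n$ extra vertices, yielding at most $n$ vertex-disjoint paths (with isolated vertices counted as length-0 paths). If $m+n$ is even, the leftover color is a 1-factor of $K_{m+n}$, whose restriction to $V(K_m)$ is a matching together with at most $n$ isolated vertices (one for each extra vertex met by a matching edge going out of $V(K_m)$). The count $k=\lceil(m+n-1)/2\rceil$ is forced by a degree count at each vertex.

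For sufficiency, I would first build the amalgamated host graph $H=K_m^+$ on vertex set $V(K_m)\cup\{u\}$: keep all edges of $K_m$, add $n$ parallel edges between $u$ and each $v\in V(K_m)$, and place $\binom{n}{2}$ loops at $u$. Set $\eta(u)=n$ and $\eta(v)=1$ for $v\in V(K_m)$, so that an $\eta$-detachment of $H$ splits $u$ into $n$ new vertices and produces a simple graph on $m+n$ vertices; the multiplicity conditions (A3) and (A5) of Theorem \ref{mainth} will collapse to equalities, giving exactly $K_{m+n}$.

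The main step is to extend the given $k$-edge-coloring of $K_m$ to all edges and loops of $H$ so that, for each ``Hamiltonian'' color $j$, the class $H(j)$ is connected and every non-$u$ vertex has degree exactly $2$, with $d_{H(j)}(u)=2n$; and so that, if $m+n$ is even, the remaining ``1-factor'' color restricted to $H$ is 1-regular on $V(K_m)\cup\{u\}$ in the appropriate generalized sense ($d(u)=n$). Concretely, if color $j$ on $K_m$ consists of $p_j\le n$ paths, then I would attach one edge of color $j$ from $u$ to each path endpoint (two edges to the lone vertex of a trivial path), using $2p_j$ of the edges incident with $u$, and then place $n-p_j\ge 0$ loops of color $j$ at $u$. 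For the 1-factor color (when $m+n$ is even), each of the at most $n$ isolated vertices of that class in $K_m$ receives one edge to $u$, and the remaining edges at $u$ in this color are paired up into loops. The arithmetic obstruction here is to show the totals match: $\sum_j 2p_j$ should equal $nm$ and $\sum_j (n-p_j)$ should equal $\binom{n}{2}$, which follows directly from $\sum_j(m-p_j)=\binom{m}{2}$ together with $k=\lceil(m+n-1)/2\rceil$ (and a parity check in the 1-factor case). Each path, together with its two $u$-edges, forms a cycle or a digon through $u$, so $H(j)$ is connected for every Hamiltonian color.

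Finally I would apply Theorem \ref{mainth} to this edge-colored $H$. Conditions (A1) and (A2) give every $u_i$ degree $d_H(u)/n=m+n-1$ in $G$ and degree $d_{H(j)}(u)/n=2$ (or $1$ for the 1-factor color) in each $G(j)$; (A3) and (A5) force $m_G\equiv 1$ between every pair of distinct vertices so that $G\cong K_{m+n}$; and crucially (A7) applies to every Hamiltonian color $j$ because $d_{H(j)}(u)/n=2$ is even, yielding $\omega(G(j))=\omega(H(j))=1$, so that the 2-regular $G(j)$ is a single Hamiltonian cycle. The 1-factor color has $d_{G(k)}(v)=1$ everywhere and is loopless, hence a perfect matching. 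The hardest part to execute cleanly is precisely the bookkeeping in the previous paragraph, ensuring that the prescribed coloring of the edges at $u$ uses each of the $nm$ bridge-edges and $\binom{n}{2}$ loops exactly once while keeping every Hamiltonian color class connected; once that is done, Theorem \ref{mainth} and condition (A7) finish the proof mechanically.
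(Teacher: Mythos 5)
Your proposal is correct and follows essentially the same route as the paper: build the amalgamation $K_m^+$ with $\binom{n}{2}$ loops at $u$ and $n$ parallel edges to each vertex, extend the coloring so that $d_{H(j)}(v)=2$ and $d_{H(j)}(u)=2n$ (with $n-p_j$ loops per color), and invoke Theorem \ref{mainth}. You simply supply more of the bookkeeping (the counts $\sum_j 2p_j=nm$, $\sum_j(n-p_j)=\binom{n}{2}$, and the connectivity of each $H(j)$ needed for (A7)) that the paper's terser proof leaves implicit.
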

\begin{proof} The necessity is obvious. To prove the sufficiency, let $p_i\leq n$ be the number of paths colored $i$, $1\leq i\leq k$.  
Form a graph $H$ by adding a new vertex $u$ to $K_m$ so that $\ell(u)= \binom{n}{2}$, $m(u,v)=n$ for each $v\in V(K_m)$, and $\eta(u)=n$. Color the new edges incident with vertices in $K_m$ so that $d_{H(j)}(v)=2$ for $v\in V(K_m)$, $1\leq j\leq k$
 (if $m+n$ is even, do it so that $d_{H(k)}(v)=1$ for $v\in V(K_m)$; so at most $n$ such edges are incident with $u$ by necessary conditions). Clearly, each color appears on an even number of such edges (except possibly color $k$ when $m+n$ is odd). 
 Color the loops so that $d_{H(j)}(u)=2n$ for $1\leq j\leq k$ (if $m+n$ is even, then the coloring must be so that $d_{H(k)}(u)=n$). 
This is possible since each color appears on $(2n-2p_i)/2\geq 0$ loops. Now applying Theorem \ref{mainth} completes the proof. 
\end {proof}
A similar result can be obtained for embedding $\lambda K_m$ into a Hamiltonian decomposition of $\lambda K_{m+n}$. A more general problem is the following enclosing problem
\begin{question} Find necessary and sufficient conditions for enclosing an edge-colored $\lambda K_m$ into a Hamiltonian decomposition of $\mu K_{m+n}$ for $\lambda < \mu$. 
\end{question}
 An \textit{$(r_1,\ldots, r_k)$-factorization} of a graph $G$ is a partition (decomposition) $\{F_1,\ldots, F_k\}$ of $E(G)$ in which $F_i$ is an $r_i$-factor for $i=1,\ldots, k$. The following is a corollary of a strong result of Johnson \cite{mJohns} in which each color class can have a specified edge-connectivity. A special case of this is proved by Johnstone in \cite{johnst}.  
\begin{theorem} \label{knrkfac}
$\lambda K_n$ is $(r_1,\dots,r_k)$-factorizable if and only if $r_i n$ is even for $1\leq i\leq k$, and $\sum_{i=1}^k r_i=\lambda (n-1)$. Moreover,  for $1\leq i\leq k$ each $r_i$-factor can be guaranteed to be connected if $r_i$ is even.  
\end{theorem}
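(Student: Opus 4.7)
The plan is to realize $\lambda K_n$ as a detachment of a single-vertex multigraph with loops and let Theorem \ref{mainth} do the rest. Necessity is routine: any $r_i$-factor of $\lambda K_n$ has $r_i n/2$ edges (so $r_i n$ must be even), and summing degrees at a fixed vertex across the factorization gives $\sum_{i=1}^{k} r_i = \lambda(n-1)$.

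For sufficiency, I would take the amalgamation $H$ with $V(H)=\{v\}$, $\eta(v)=n$, and $\ell_H(v)=\lambda\binom{n}{2}$. Since each $r_i n/2$ is a nonnegative integer and $\sum_{i=1}^{k} r_i n/2 = (n/2)\lambda(n-1) = \lambda\binom{n}{2}$, the loops of $H$ can be distributed among $k$ color classes so that color $i$ receives exactly $r_i n/2$ loops; equivalently, $\ell_{H(i)}(v)=r_i n/2$ and $d_{H(i)}(v)=r_i n$. The hypothesis ``$\eta(v)=1\Rightarrow\ell_H(v)=0$'' is vacuous for $n\geq 2$ (the case $n=1$ being trivial), so Theorem \ref{mainth} produces a loopless $\eta$-detachment $G$ on vertices $v_1,\ldots,v_n$.

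Two integrality checks then upgrade the $\approx$ relations to equalities and finish the proof. First, $\ell_H(v)/\binom{n}{2}=\lambda$ is an integer, so (A3) gives $m_G(v_j,v_{j'})=\lambda$ for all $j\neq j'$, i.e.\ $G=\lambda K_n$. Second, $d_{H(i)}(v)/\eta(v)=r_i$ is an integer, so (A2) gives $d_{G(i)}(v_j)=r_i$ for every $j$, making $G(i)$ an $r_i$-factor. For the ``moreover'' clause, whenever $r_i\geq 2$ is even, $d_{H(i)}(v)/\eta(v)=r_i$ is even, and since $H(i)$ is connected (a single vertex carrying positively many loops), (A7) yields $\omega(G(i))=\omega(H(i))=1$, so $G(i)$ is connected. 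The only real design step is choosing $H$ and distributing the loops; every other detail is an immediate consequence of Theorem \ref{mainth}, so there is no genuine obstacle here once that theorem is in hand.
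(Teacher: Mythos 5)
Your proposal is correct and follows exactly the paper's route: amalgamate $\lambda K_n$ to a single vertex $v$ with $\lambda\binom{n}{2}$ loops, color the loops so that $\ell_{H(i)}(v)=r_in/2$, and apply Theorem \ref{mainth}, with (A2), (A3) and (A7) delivering the factorization, the simple structure $\lambda K_n$, and connectivity for even $r_i$. The only difference is that you spell out the integrality checks and the loop count that the paper leaves implicit.
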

\begin{proof} The necessity is obvious. To prove the sufficiency, start from the graph $H$ as in the proof of Theorem \ref{knhdthm}, but color the loops so that $\ell_{H(j)}(v)=nr_j/2$ for $1\leq j\leq k$. Then apply Theorem \ref{mainth}.  
\end {proof}
The following result was proved for the special case $r_1=\ldots =r_k=r$ in \cite{AndHilt, RW}. 
\begin{theorem} \label{knrkfactembd}
A $k$-edge-coloring of $K_m$ can be embedded into an $(r_1,\dots\\,r_k)$-factorization of $K_{m+n}$ if and only if $r_i (m+n)$ is even for $1\leq i\leq k$, $\sum_{i=1}^k r_i=m+n-1$, $d_{K_m(i)}(v)\leq r_{\sigma(i)}$ for each $v\in V(K_m)$, $1\leq i\leq k$, and some permutation $\sigma\in S_k$, and $|E(K_m(i))|\geq r_{\sigma(i)}(m-n)/2$. 
\end{theorem}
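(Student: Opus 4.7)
The plan is to follow the amalgamation-and-detachment scheme used in Theorem \ref{knhdthmembd}, replacing the target Hamilton-cycle degree $2$ by the prescribed factor degrees $r_i$. After relabeling colors so that $\sigma$ is the identity, I would form an amalgamation $H$ of $K_{m+n}$ in which the $n$ new vertices are collapsed to a single vertex $u$, produce an edge-coloring of the added edges and loops whose degree sequence at every vertex matches what each $r_i$-factor will demand after detachment, and then invoke Theorem \ref{mainth} to split $u$ into $n$ vertices so that color class $i$ becomes exactly $r_i$-regular.

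Necessity reduces to standard counting. An $r_i$-factor of $K_{m+n}$ has $r_i(m+n)/2$ edges, so $r_i(m+n)$ is even; summing degrees at a single vertex gives $\sum_i r_i = m+n-1$; and if color $i$ of $K_m$ extends into the factor indexed by $\sigma(i)$, then clearly $d_{K_m(i)}(v)\le r_{\sigma(i)}$. The last inequality follows by writing $2|E(K_m(i))|+c_i = m r_{\sigma(i)}$, where $c_i$ counts the edges of the $\sigma(i)$-factor crossing between $V(K_m)$ and the $n$ new vertices, and observing that $c_i\le n r_{\sigma(i)}$.

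For sufficiency, I would build $H$ as in the proof of Theorem \ref{knhdthmembd}: adjoin one new vertex $u$ to $K_m$ with $\ell_H(u)=\binom{n}{2}$, $m_H(u,v)=n$ for every $v \in V(K_m)$, and set $\eta(u)=n$, $\eta(v)=1$ otherwise. Color the $n$ edges from each $v \in V(K_m)$ to $u$ so that exactly $r_i-d_{K_m(i)}(v)\ge 0$ of them receive color $i$ (these values sum to $n$ by $\sum r_i=m+n-1$), giving $d_{H(i)}(v)=r_i$. Next, color the loops by setting $\ell_{H(i)}(u)=|E(K_m(i))|-(m-n)r_i/2$. The hypothesis $|E(K_m(i))|\ge (m-n)r_i/2$ makes each term nonnegative, and a short algebraic check using $|E(K_m)|=\binom{m}{2}$ and $\sum r_i=m+n-1$ gives $\sum_i \ell_{H(i)}(u)=\binom{n}{2}$ and $d_{H(i)}(u)=nr_i$ for every $i$.

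Now I would apply Theorem \ref{mainth} to $H$ with this $\eta$; the hypothesis is satisfied because $\ell_H(v)=0$ for every $v$ with $\eta(v)=1$. Property \textup{(A2)} gives $d_{G(i)}(u_j) \approx n r_i/n = r_i$, an integer and therefore exact, so the detached graph $G=K_{m+n}$ is $r_i$-regular in color $i$ at every new vertex and unchanged in color $i$ at $V(K_m)$; hence color class $i$ is an $r_i$-factor as required. The only step that needs care is the choice of $\ell_{H(i)}(u)$: this is where the hypothesis $|E(K_m(i))|\ge r_{\sigma(i)}(m-n)/2$ is exactly consumed, and where one must verify that the loop counts simultaneously satisfy $\sum_i \ell_{H(i)}(u)=\binom{n}{2}$ and reproduce $d_{H(i)}(u)=nr_i$.
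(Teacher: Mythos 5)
Your proposal is correct and follows essentially the same route as the paper: adjoin an amalgamated vertex $u$ with $\binom{n}{2}$ loops and $n$ edges to each old vertex, color the cross edges so $d_{H(i)}(v)=r_{\sigma(i)}$, color the loops so $d_{H(i)}(u)=nr_{\sigma(i)}$ (using the last hypothesis for nonnegativity of the loop counts), and detach via Theorem \ref{mainth}. Your version is in fact more careful than the paper's terse proof --- in particular your target $d_{H(i)}(u)=nr_{\sigma(i)}$ is the correct one (the paper's stated $r_{\sigma(i)}m$ is evidently a slip, since $u$ splits into $n$ vertices each needing degree $r_{\sigma(i)}$), and your verification that $\sum_i\ell_{H(i)}(u)=\binom{n}{2}$ is a worthwhile check the paper omits.
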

\begin{proof} The necessity is obvious. To prove the sufficiency, start from the graph $H$ as in the proof of Theorem \ref{knhdthmembd}. Color the new edges incident with vertices in $K_m$ so that $d_{H(j)}(v)=r_{\sigma(j)}$ for  $v\in V(K_m)$, $1\leq j\leq k$. Then color the loops incident with $u$ so that $d_{H(j)}(u)=r_{\sigma(j)} m$ for $1\leq j\leq k$ (the last necessary condition guarantees that the number of required loops is non-negative), and apply Theorem \ref{mainth}. 
 \end {proof}
 \begin{question} Find necessary and sufficient conditions for enclosing an edge-colored $\lambda K_n$ into an $(r_1,\dots,r_k)$-factorization of $\mu K_{m+n}$ for $\lambda \leq \mu$. 
\end{question}
The case $\lambda=\mu$ can be obtained by altering the proof of Theorem \ref{knrkfactembd} slightly. 

 Some of the above results can be easily generalized to complete multipartite graphs. 
\begin{theorem} \textup{(Laskar, Auerbach \cite{LA})}\label{hdknr}
$\lambda K_{n_1,\dots,n_m}$ is Hamiltonian decomposable (with a 1-factor leave, respectively) if and only if $n_1=\dots=n_m:=n$, and $\lambda n(m-1)$ is even (odd, respectively).
\end{theorem}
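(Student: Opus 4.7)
The necessity is immediate: a Hamiltonian decomposition (with or without a $1$-factor leave) forces the underlying graph to be regular, so the degrees $\sum_{j\ne i}\lambda n_j$ must all coincide, whence $n_1=\cdots=n_m=:n$. The common degree $\lambda n(m-1)$ must be even in the decomposition case and odd in the $1$-factor leave case.

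For sufficiency, I would let $H$ be the amalgamation of $\lambda K_{n,\dots,n}$ obtained by contracting each of the $m$ parts to a single vertex; so $V(H)=\{w_1,\dots,w_m\}$, $\ell_H(w_i)=0$, $m_H(w_i,w_j)=\lambda n^2$ for $i\ne j$, and $H$ is nothing but $\lambda n^2 K_m$. Set $\eta(w_i)=n$ and $k=\lfloor\lambda n(m-1)/2\rfloor$. Apply Theorem \ref{knrkfac} to $H$ (viewed as $\lambda'K_m$ with $\lambda'=\lambda n^2$) to decompose $H$ into $k$ connected $2n$-factors when $\lambda n(m-1)$ is even, and into $k$ connected $2n$-factors together with one additional $n$-factor when $\lambda n(m-1)$ is odd. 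The hypotheses of Theorem \ref{knrkfac} are met: $2nm$ is always even, and in the odd case $\lambda$, $n$, $m-1$ are all odd so $m$ is even and $nm$ is even as well; the factor sizes sum to $\lambda n^2(m-1)$ in both cases; and the ``moreover'' clause of Theorem \ref{knrkfac} supplies the connectedness of each $2n$-factor at no extra cost.

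Interpreting this decomposition as a $k$- or $(k+1)$-edge-coloring of $H$, I would then apply Theorem \ref{mainth}. Conditions (A3) and (A5) give $m_G(u_i,u_{i'})=0$ for vertices in the same part and $m_G(u_i,v_{i'})=\lambda$ for vertices in different parts, so $G=\lambda K_{n,\dots,n}$ exactly. By (A2) each $2n$-color class in $G$ is $2$-regular and, when present, the $n$-color class is $1$-regular; (A7), applicable since $d_{H(j)}(w_i)/\eta(w_i)=2$ is even for every $2n$-color class, preserves connectedness and hence promotes each such class to a Hamiltonian cycle, while the $n$-color class becomes a $1$-factor. The only delicate point is making the two applications dovetail: writing the potential leave as an $n$-factor of $H$ (rather than any other valid size) is exactly what keeps all of the other factors $2n$-regular, so that the evenness hypothesis of (A7) fires on precisely the color classes where a single Hamiltonian cycle is required.
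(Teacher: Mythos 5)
Your proof is correct and follows essentially the same route as the paper: amalgamate each part to get $H=\lambda n^2K_m$, use Theorem \ref{knrkfac} to obtain a connected $2n$-factorization (plus an $n$-factor in the odd case), and detach via Theorem \ref{mainth}. You actually supply more detail than the paper's one-line proof, in particular the explicit handling of the $1$-factor leave case via the extra $n$-factor, which the paper leaves implicit.
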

\begin{proof} The necessity is obvious. To prove the sufficiency,  consider the graph $H:=\lambda n^2K_m$, and $\eta:V(H)\rightarrow \mathbb{N}$ with $\eta(v)=n$ for each $v\in V(H)$. Using Theorem \ref{knrkfac}, find a connected $2n$-factorization of $H$ and apply Theorem \ref{mainth}. 
\end {proof}
Another very nice requirement that one can ask of a Hamiltonian
decomposition of a complete multipartite graph is that it be fair; that
is, in each Hamiltonian cycle, the number of edges between each pair of
parts is within one of the number of edges between each other pair of
parts. This result can be proved by being more careful in the
construction of the edge-coloring of the graph $H$ described in the proof
of Theorem 2.5; ensure that for each color class the number of edges
between each pair of vertices in $H$ is within 1 of the number of edges
between each other pair of vertices (one could think of this color class
as being ``equimultiple"). Leach and Rodger \cite{LRfairhd} used this approach to
prove that
\begin{theorem} \label{fairhdknr}
$K_{n_1,\dots,n_m}$ is fair Hamiltonian decomposable  if and only if $n_1=\dots=n_m:=n$, and $n(m-1)$ is even. 
\end{theorem}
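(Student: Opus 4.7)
The necessity follows from Theorem~\ref{hdknr}, since a fair Hamiltonian decomposition is in particular a Hamiltonian decomposition. For sufficiency, assume $n_1=\cdots=n_m:=n$ with $n(m-1)$ even, and set $k=n(m-1)/2$. My plan is to apply Theorem~\ref{mainth} twice: first to build an \emph{equimultiple} connected $2n$-factorization of the intermediate multigraph $n^2K_m$, and then to detach this factorization into a fair Hamiltonian decomposition of $K_{n,\ldots,n}$. As in the remark preceding the statement, ``equimultiple'' will mean that within each color class the multiplicities between different pairs of vertices differ by at most one; this is the property that will later translate into fairness upon detachment, and arranging it is the one real obstacle in the plan.

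For the first application, I would start with the one-vertex graph $H_0$ on $\{v_0\}$ carrying $\ell_{H_0}(v_0)=\binom{m}{2}n^2$ loops, and assign to each color $j\in\{1,\ldots,k\}$ exactly $nm$ of these loops; this is an exact partition since $k\cdot nm=\binom{m}{2}n^2$. Applying Theorem~\ref{mainth} with $\eta(v_0)=m$ then produces a loopless detachment $H$ in which~(A3) forces $m_H(u,v)=n^2$ for each pair of detached vertices, so $H=n^2K_m$. Property~(A2) makes each color class $H(j)$ a $2n$-regular spanning subgraph, (A7) makes each $H(j)$ connected (since $d_{H_0(j)}(v_0)/\eta(v_0)=2n$ is even), and crucially~(A4) yields the desired equimultiplicity of $H(j)$ with target value $\ell_{H_0(j)}(v_0)/\binom{m}{2}=2n/(m-1)$. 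So the obstacle dissolves: it is exactly~(A4), applied to a single vertex with uniformly colored loops, that supplies the equimultiplicity essentially for free.

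For the second application, I would take the colored $H$ just constructed and invoke Theorem~\ref{mainth} again with $\eta(v)=n$ for every $v\in V(H)$. Now~(A5) forces $m_G(u_i,v_{i'})=1$ for every cross-pair of vertices, and no within-part edges can occur since $H$ is loopless, so the detachment $G$ is $K_{n,\ldots,n}$ with parts $V_u:=\phi^{-1}(u)$. Property~(A2) makes each $G(j)$ $2$-regular and~(A7) makes it connected, hence a Hamiltonian cycle. Because the amalgamation bijection on edges respects colors, the total number of edges of $G(j)$ joining $V_u$ and $V_v$ equals $m_{H(j)}(u,v)$; the equimultiplicity of $H(j)$ established in the previous step therefore passes through directly to the required fairness of $G(j)$, completing the plan.
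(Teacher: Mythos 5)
Your proof is correct, and for sufficiency it follows the same skeleton the paper sketches in the remark preceding the statement: amalgamate each part of $K_{n,\ldots,n}$ to a single vertex to obtain $n^2K_m$, produce a connected $2n$-factorization of $n^2K_m$ in which every factor is equimultiple, and detach via Theorem \ref{mainth} so that (A5), (A2) and (A7) give simple cross-part multiplicities, $2$-regularity and connectivity, with equimultiplicity of each $H(j)$ translating directly into fairness of each Hamiltonian cycle. Where you genuinely depart from the paper is in how the equimultiple factorization is obtained: the paper only says to ``be more careful in the construction of the edge-coloring'' and defers to the Leach--Rodger construction, whereas you manufacture it by a further amalgamation down to a single vertex carrying $\binom{m}{2}n^2$ loops, shared exactly evenly among the $k=n(m-1)/2$ colors (which works since $k\cdot nm=\binom{m}{2}n^2$), so that (A4) delivers equimultiplicity with target $2n/(m-1)$ essentially for free, while (A3) forces $m_H(u,v)=n^2$ and (A2), (A7) give $2n$-regularity and connectivity of each factor. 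This double-detachment argument is clean and self-contained, and it makes the point that the ``careful construction'' the paper alludes to is itself just another instance of Theorem \ref{mainth}; the only implicit assumptions are the harmless ones $m\geq 2$ (so $\binom{m}{2}\geq 1$) and, in the second stage, that $H$ is loopless so the hypothesis of Theorem \ref{mainth} holds even where $\eta(v)=1$.
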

\begin{question} Find necessary and sufficient conditions for enclosing a $k$-edge-colored $\lambda K_{n_1,\dots,n_m}$ into a (fair) Hamiltonian decomposition  of $\mu K_{n'_1,\dots,n'_{m'}}$ for $n_i\leq n'_i$, $1\leq i\leq m\leq m'$, and $\lambda \leq \mu$. %$n+m$ is odd, and each color class of $K_n$ is a collection of at most $m$ disjoint paths.
\end{question}
\begin{theorem} $\lambda K_{n_1,\dots,n_m}$ is $(r_1,\dots,r_k)$-factorizable if and only if $n_1=\dots=n_m:=n$, $r_i nm$ is even for $1\leq i\leq k$, and $\sum_{i=1}^k r_i=\lambda n(m-1)$.
\end{theorem}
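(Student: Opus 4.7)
The plan is to adapt the proof of Theorem \ref{hdknr}, using Theorem \ref{knrkfac} to produce the right edge-coloring upstairs before detaching.

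For necessity, the union of the factors equals $\lambda K_{n_1,\dots,n_m}$, which must therefore be $\sum_{i=1}^k r_i$-regular. Since the degree of a vertex lying in part $j$ of $K_{n_1,\dots,n_m}$ is $\sum_{j'\neq j}n_{j'}$, regularity forces $n_1=\dots=n_m=:n$; then each vertex has degree $\lambda n(m-1)$, giving $\sum_{i}r_i=\lambda n(m-1)$. Finally, each $r_i$-factor is a regular spanning subgraph on $nm$ vertices, so $r_inm$ must be even.

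For sufficiency, assume $n_1=\dots=n_m=n$. Consider the (loopless) graph $H:=\lambda n^2 K_m$ and the function $\eta:V(H)\to\mathbb{N}$ with $\eta(v)=n$ for each $v\in V(H)$. Observe that $H$ has $m$ vertices, every pair joined by $\lambda n^2$ edges, and is $\lambda n^2(m-1)$-regular. The integers $r_1 n,\dots,r_k n$ satisfy $(r_i n)\cdot m=r_i nm$ is even and $\sum_{i=1}^k (r_i n)=n\sum_i r_i=\lambda n^2(m-1)$, so by Theorem \ref{knrkfac} applied to $H=\lambda n^2 K_m$, there exists an $(r_1 n,\dots,r_k n)$-factorization of $H$; regard this as a $k$-edge-coloring in which $d_{H(j)}(v)=r_j n$ for every $v\in V(H)$ and every $j$.

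Now apply Theorem \ref{mainth} to $H$ with the given $\eta$ (the hypothesis $\eta(v)=1\Rightarrow \ell_H(v)=0$ is vacuous since $H$ is loopless, and we may assume $n\geq 2$, for $n=1$ the statement reduces to Theorem \ref{knrkfac}). Let $G$ be the resulting $\eta$-detachment. By (A5), $m_G(u_i,v_{i'})\approx \lambda n^2/n^2=\lambda$, and by (A3) (which, since $\ell_H(u)=0$, forces $m_G(u_i,u_{i'})=0$), there are no edges inside a part, so $G\cong \lambda K_{n,\dots,n}=\lambda K_{n_1,\dots,n_m}$. By (A2), $d_{G(j)}(u_i)\approx d_{H(j)}(u)/\eta(u)=r_j n/n=r_j$, which being an integer sandwiched between $\lfloor r_j\rfloor$ and $\lceil r_j\rceil$ equals $r_j$ exactly; so each color class of $G$ is an $r_j$-factor, giving the desired $(r_1,\dots,r_k)$-factorization of $\lambda K_{n_1,\dots,n_m}$.

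The only place where a subtlety could enter is in verifying that the $\approx$'s in (A2), (A3) and (A5) collapse to equalities, but this is immediate from integrality once the right numerics are set up upstairs in $H$, which is what the choice $H=\lambda n^2 K_m$ is engineered to accomplish.
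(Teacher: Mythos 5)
Your proposal is correct and follows exactly the paper's argument: take $H=\lambda n^2K_m$ with $\eta\equiv n$ as in Theorem \ref{hdknr}, use Theorem \ref{knrkfac} to obtain an $(nr_1,\dots,nr_k)$-factorization of $H$, and then apply Theorem \ref{mainth}. You have simply written out the routine verifications (the hypotheses of Theorem \ref{knrkfac} and the collapse of the $\approx$'s to equalities) that the paper leaves implicit.
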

\begin{proof} The necessity is obvious. To prove the sufficiency,  use Theorem \ref{knrkfac} to find an  $(nr_1,\dots,nr_k)$-factorization of the graph $H$ described in the proof of Theorem \ref{hdknr}; then apply Theorem \ref{mainth}. 
\end {proof}
\begin{question} Find necessary and sufficient conditions for enclosing a $k$-edge-colored $\lambda K_{n_1,\dots,n_m}$  into an $(r_1,\dots,r_k)$-factorization of $\mu K_{n'_1,\dots,n'_{m'}}$ for $n_i\leq n'_i$, $1\leq i\leq m\leq m'$, and $\lambda \leq \mu$.
\end{question}
The Oberwolfach problem $OP(r_1^{a_1},\dots, r_k^{a_k})$ asks whether or not it is possible to partition the edge set of $K_n$, $n$ odd, or $K_n$ with a 1-factor removed when $n$ is even, into isomorphic 2-factors such that each 2-factor consists of $a_j$ cycles of length $r_j$, $1\leq j\leq k$, and $n=\sum _{j=1}^{k} r_j a_j$. In \cite{HiltonMJohnsonJLM01} some new solutions to the Oberwolfach problem are given  using the amalgamation technique. 

\section {Hamiltonian Decomposition of $K(n_1,\ldots, n_m;\lambda  ,\mu )$ }
Let $K(n_1,\ldots,$ $n_m;\lambda,\mu )$ denote a graph with $m$ parts, the $i^{th}$ part having size $n_i$, in which multiplicity of each pair of vertices in the same part (in different parts) is  $\lambda$ ($\mu $, respectively). When $n_1 = \ldots = n_m=n$, we denote $K(n_1,\ldots, n_m;\lambda,\mu )$ by $K(n^{(m)};\lambda,\mu )$. In \cite{BahRod1}, %(and \cite{BahRodamalgamsurvey}, respectively), 
we  settled the existence of Hamiltonian decomposition for $K(n_1,\ldots,n_m;\lambda,\mu)$, %(with a 1-factor leave, respectively), 
a graph of particular interest to statisticians, who consider group divisible designs with two associate classes. 

\begin{theorem}\textup{(Bahmanian, Rodger \cite[Theorem 4.3]{BahRod1})}\label{hdka1apl1l2}
Let $m> 1$, $\lambda \geq 0$, and $\mu \geq 1$, with $\lambda   \neq \mu $ be integers. Let $n_1,\ldots,n_m$ be positive integers with $n_1\leq\ldots \leq n_m$, and $n_m\geq 2$. Then $G=K(n_1, \dots, n_m; \lambda , \mu )$ is Hamiltonian decomposable if and only if the following conditions are satisfied:
\begin{itemize}
\item [\textup{(i)}] $n_i=n_j:=n$ for $1\leq i<j\leq m;$
\item [\textup{(ii)}]$\lambda (n-1)+\mu n(m-1)$ is an even integer\textup{;} 
\item [\textup{(iii)}]$\lambda \leq \mu n(m-1).$
\end{itemize}
\end{theorem}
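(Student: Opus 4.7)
For necessity, I will exploit the fact that any Hamiltonian decomposable graph is regular of even degree. A vertex in part $i$ has degree $\lambda(n_i-1)+\mu(N-n_i)$ where $N=\sum_j n_j$; because $\lambda\neq\mu$, regularity forces $n_1=\cdots=n_m=n$, which is (i), and then the common degree $\lambda(n-1)+\mu n(m-1)$ must be even, which is (ii). To obtain (iii), I will restrict any single Hamiltonian cycle to one of the $m$ parts: since $m\geq 2$, this restriction is a disjoint union of paths on the $n$ vertices of the part, so it uses at most $n-1$ internal edges. With $k:=[\lambda(n-1)+\mu n(m-1)]/2$ cycles in the decomposition, summing the internal-edge counts over all cycles and all parts gives $m\lambda\binom{n}{2}\leq km(n-1)$, which rearranges to $\lambda\leq\mu n(m-1)$.

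For sufficiency, I will follow the amalgamation pattern used in Theorems \ref{knhdthm} and \ref{hdknr}. Let $H$ be the amalgamation of $G$ obtained by collapsing each part of $G$ to a single vertex, so $V(H)=\{u_1,\ldots,u_m\}$, $\ell_H(u_i)=\lambda\binom{n}{2}$, and $m_H(u_i,u_j)=\mu n^2$ for $i\neq j$; thus $d_H(u_i)=\lambda n(n-1)+\mu n^2(m-1)=2nk$. Set $\eta(u_i)=n$ for all $i$. The core task will be to produce a $k$-edge-coloring of $H$ in which every color class is a \emph{connected} $2n$-regular spanning subgraph. Once that is done, (A2) of Theorem \ref{mainth} forces each detached vertex to have degree $2$ in every color, (A7) (which applies because $d_{H(j)}(u_i)/\eta(u_i)=2$ is even) promotes connectedness of $H(j)$ to connectedness of $G(j)$, and (A3), (A5) fix the within-part and between-part multiplicities of the loopless detachment at $\lambda$ and $\mu$ respectively. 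Hence the detachment is $K(n^{(m)};\lambda,\mu)=G$ and each of its color classes is a Hamiltonian cycle.

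To build the required coloring of $H$, I will treat loops and non-loop edges separately. Rearranging (iii) gives $\lambda\binom{n}{2}\leq k(n-1)$, which lets me choose integers $\ell_1,\ldots,\ell_k$ with $0\leq\ell_j\leq n-1$ and $\sum_j\ell_j=\lambda\binom{n}{2}$; at each vertex $u_i$ I assign exactly $\ell_j$ of its loops to color $j$. Setting $r_j:=2n-2\ell_j\geq 2$, I will invoke Theorem \ref{knrkfac} applied to the non-loop part $\mu n^2 K_m$ of $H$: each $r_j$ is even so $r_j m$ is even, $\sum_j r_j=2nk-\lambda n(n-1)=\mu n^2(m-1)$, and evenness of each $r_j$ permits each factor in the resulting $(r_1,\ldots,r_k)$-factorization to be chosen connected. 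Combined with the loop coloring, each $H(j)$ is $2n$-regular and connected, and Theorem \ref{mainth} finishes the proof. The main obstacle is ensuring $r_j\geq 2$ for every $j$, i.e.\ that no color class of $H$ consists entirely of loops (which would detach to a disconnected graph); this is precisely where hypothesis (iii) enters the argument.
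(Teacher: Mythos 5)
Your argument is correct, and its skeleton is the one the paper uses throughout: amalgamate each part to a single vertex, build a $k$-edge-coloring of the amalgamated graph $H$ in which every color class is connected and $2n$-regular, and let Theorem \ref{mainth} (via (A2), (A3), (A5), (A7)) detach this to a Hamiltonian decomposition of $K(n^{(m)};\lambda,\mu)$; the necessity half (regularity plus the pure-edge count) also matches. Where you genuinely diverge is in how the coloring of $H$ is constructed. The paper's template (see its proof of the companion Theorem \ref{hd1fka1apl1l2}) takes a Hamiltonian decomposition of the non-loop part $H^*\cong\mu n^2K_m$ via Theorem \ref{knhdthm} --- condition (iii) enters there through the inequality guaranteeing at least $k$ such cycles --- seeds each color class with one of these cycles to secure connectivity, and then distributes all remaining edges, loops included, using Hilton's evenly-equitable edge-coloring theorem (Theorem \ref{eveneq}) so that each class reaches degree $2n$ everywhere. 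You instead pre-assign the loops, using (iii) in the equivalent form $\lambda\binom{n}{2}\le k(n-1)$ to keep each $\ell_j\le n-1$, and then invoke the connected $(r_1,\ldots,r_k)$-factorization theorem (Theorem \ref{knrkfac}) on $\mu n^2K_m$ with $r_j=2n-2\ell_j\ge 2$. Both routes are valid. Yours makes the role of (iii) especially transparent --- it is precisely what prevents some color class from consisting entirely of loops --- but it relies on the stronger connected-factorization result of Johnson, whereas the paper gets by with the more elementary ingredients of Walecki's construction and evenly-equitable colorings.
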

\begin{example} Figure \ref{hdk22221} illustrate a Hamiltonian decomposition of $K(2^{(3)};2,1)$.
\begin{figure}[htbp]
\begin{center}
\scalebox{.40}{ \input {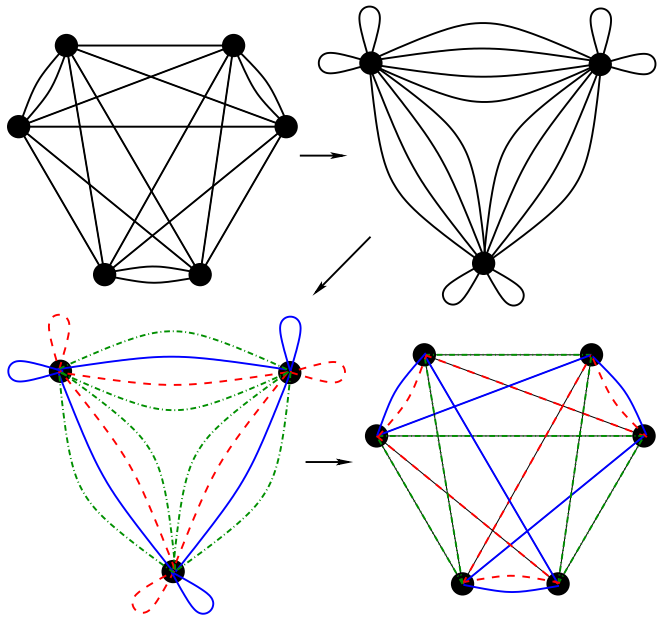} }
\caption{A Hamiltonian Decomposition of $K(2^{(3)};2,1)$  }
\label{hdk22221}
\end{center}
\end{figure} 
\end{example}
In this paper, we solve the companion problem; that is the Hamiltonian decompositions problem for  $K(n_1, \dots, n_m; \lambda , \mu )$ when it is a regular graph of odd degree. The details are provided in order that the reader may become more familiar with the nuances of using amalgamations.  

A graph $G$ is said to be even if all of its vertices have even degree. Let $k$ be a positive integer. We say that $G$ has an evenly-equitable $k$-edge-coloring if $G$ has a $k$-edge-coloring for which,  for each $v\in V(G)$
\begin{itemize}
\item [(i)] $d_{G(i)}(v)$ is even for $1\leq i\leq k$, and 
\item [(ii)] $|d_{G(i)}(v)-d_{G(j)}(v)|\in\{0,2\}$ for $1\leq i,j\leq k$.  
\end{itemize}
We need the following theorem of Hilton \cite{H1}. (It may help to recall that the definition of $k$-edge-coloring allows some color classes to be empty. It is also worth noting that the following theorem is true even if the graph contains loops.)
\begin{theorem}\textup{(Hilton \cite[Theorem 8]{H1})}\label{eveneq}
Each finite even graph has an evenly-equitable $k$-edge-coloring for each positive integer $k$.
\end{theorem}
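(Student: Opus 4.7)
The plan is an induction on the number of colors $k$. When $k=1$, every edge is given the one available color and, since $G$ is even, both conditions (i) and (ii) follow immediately. For $k\geq 2$, the goal is to peel off a single ``thin'' even color class $F$ and then invoke the inductive hypothesis on the graph $G\setminus F$. The target is an even spanning subgraph $F\subseteq G$ satisfying
\[
  d_F(v)\in\bigl\{\,2\lfloor d_G(v)/(2k)\rfloor,\ 2\lceil d_G(v)/(2k)\rceil\,\bigr\}
\]
for every $v\in V(G)$. Writing $d_G(v)=2(q_vk+r_v)$ with $0\le r_v<k$, a short arithmetic check shows that $G\setminus F$ is again even, its vertex degrees fall in the range appropriate for a $(k-1)$-coloring, and combining an evenly-equitable $(k-1)$-coloring of $G\setminus F$ supplied by the inductive hypothesis with the color-$k$ class $F$ yields an evenly-equitable $k$-coloring of $G$.

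To construct such an $F$, I would work component by component. On each component, fix an Eulerian circuit (which exists because $G$ is even); this orients the edges into a digraph $D$ with $d_D^+(v)=d_D^-(v)=d_G(v)/2$ at every vertex. Build the bipartite auxiliary graph $B$ on bipartition $(V^+,V^-)$, two disjoint copies of $V(G)$, by placing an edge $u^+v^-$ for each arc $u\to v$ of $D$, so that $d_B(v^+)=d_B(v^-)=d_G(v)/2$. Apply Theorem~\ref{BEE} to $B$ with $k$ colors to obtain a balanced, equitable, equalized $k$-edge-coloring, and let $F_0\subseteq G$ be the set of edges whose images in $B$ received color $k$. Equitability at $v^+$ and at $v^-$ independently yields $d_{B(k)}(v^{\pm})\in\{\lfloor d_G(v)/(2k)\rfloor,\lceil d_G(v)/(2k)\rceil\}$, and hence
\[
  d_{F_0}(v)=d_{B(k)}(v^+)+d_{B(k)}(v^-)\in\bigl\{\,2\lfloor\cdot\rfloor,\ \lfloor\cdot\rfloor+\lceil\cdot\rceil,\ 2\lceil\cdot\rceil\,\bigr\}.
\]

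The main obstacle is the middle value, which is odd and thus incompatible with the evenness required of $F$. My proposal is to repair it via alternating-trail swaps in $B$. Since $\sum_{v}d_{B(k)}(v^+)=|E(B(k))|=\sum_{v}d_{B(k)}(v^-)$, a handshake argument shows that the vertices at which $d_{F_0}(v)$ is odd split into two equinumerous classes according to which side, $v^+$ or $v^-$, is ``over''. Pair opposite-type vertices and, between each pair, trace a trail in $B$ alternating color $k$ with a carefully chosen second color $j$, then swap the two colors along the trail. Interior vertices of the trail are left unchanged (each simply loses one edge of color $k$ and gains one, and conversely for $j$), while the two endpoints have their parities corrected. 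The delicate point, which I expect to be the hardest step, is arguing that a suitable alternate color $j$ and alternating trail always exist and that the swap preserves equitability globally; this ultimately relies on the unit slack built into equitability (at each vertex at most one color attains the ceiling value and at most one the floor) so that when $k\ge 2$ there is always a color $j$ whose profile at both endpoints is complementary enough to absorb the swap. Iterating the swap eliminates all odd-degree vertices of $F_0$ and produces the required $F$, closing the induction.
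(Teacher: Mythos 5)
The paper does not actually prove Theorem~\ref{eveneq}: it is quoted from Hilton \cite{H1} and used as a black box, so there is no in-paper argument to measure yours against. Judged on its own terms, your reduction is sound up to a point. The induction on $k$, the arithmetic showing that removing an even subgraph $F$ with $d_F(v)\in\{2\lfloor d_G(v)/(2k)\rfloor,\ 2\lceil d_G(v)/(2k)\rceil\}$ leaves a graph whose evenly-equitable $(k-1)$-coloring combines with $F$ to give an evenly-equitable $k$-coloring, and the Euler-orientation/de Werra step producing $F_0$ with $d_{F_0}(v)$ within one of the target window are all correct.

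The gap is exactly where you flag it, and it is not a removable technicality --- it is where the content of the theorem lives. You need, for each paired $u\in S^+$ and $w\in S^-$, a color $j$ and a $(k,j)$-alternating trail whose two ends are precisely the designated vertices and whose terminal edges carry the colors needed to push both endpoint $k$-degrees the right way. None of this is established: (a) the component of $B(k)\cup B(j)$ containing $u^+$ need not contain $w^-$ or $w^+$ at all, and your count $|S^+|=|S^-|$ is global rather than per component of $B(k)\cup B(j)$, so you cannot simply re-pair within components; (b) in a multigraph whose color classes may have many edges at a vertex, a maximal alternating trail started at $u^+$ can terminate at an arbitrary vertex, or return to $u^+$, disturbing degrees you did not intend to touch; (c) the auxiliary color $j$ must simultaneously have the complementary profile at both trail ends, and after one swap the coloring of $B$ need no longer be equitable, so the ``unit slack'' you invoke is not available for subsequent swaps. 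The parity obstruction you run into is precisely the reason the evenly-equitable theorem is not a formal corollary of de Werra's equitable-coloring theorem; closing this step would amount to carrying out a full Kempe-chain analysis, i.e., essentially proving the theorem from scratch, whereas Hilton's argument in \cite{H1} builds the evenness into the construction from the outset rather than repairing it afterwards.
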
 

Let us first look at some trivial cases:
\begin{itemize}
\item [(i)] If $m=1$, then $G=\lambda K_{n_1}$ which by Theorem \ref{knhdthm}, is decomposable into Hamiltonian cycles and a single 1-factor if and only if $\lambda  (n_1-1)$ is odd. 
\item [(ii)] If $m>1$, $\mu =0$, then $G=\bigcup\limits_{i=1}^m{\lambda K_{n_i}}$. Clearly $G$ is disconnected it does not have any Hamiltonian cycle. %If $p=1$, by Theorem \ref{hdcm}, $G$ is Hamiltonian decomposable if and only if $\lambda  (n_1-1)$ is even. 
\item [(iii)] If $n_i=1$ for $1\leq i\leq m$, then $G=\mu  K_m$ which is decomposable into Hamiltonian cycles and a single 1-factor if and only if $\mu  (m-1)$ is odd. 
\item [(iv)] If $\lambda = \mu $, then $G= \lambda  K_{n_1+\dots+n_m}$ which is decomposable into Hamiltonian cycles and a single 1-factor if and only if $\lambda  (\sum\limits_{i=1}^{m}{n_i}-1)$ is even. % If $\lambda = \mu $, then condition (i):n_i=n_j is not necessary.
\item [(v)] If $\lambda =0$, and $n_i=n$ for $1\leq i\leq m$, then $G=\mu K_{\underbrace{n,\ldots,n}_m}$ which is decomposable into Hamiltonian cycles and a single 1-factor if and only if $\mu n(m-1)$ is odd (see \cite{LA}). 
\end{itemize}
We exclude the above five cases from our theorem:
\begin{theorem}\label{hd1fka1apl1l2}
Let $m>1$.  Let $n_1,\ldots,n_m$ be positive integers with $n_1\leq\ldots \leq n_m$, and $n_m\geq 2$, and  $\lambda ,\mu \geq 1$ with  $\lambda   \neq \mu $. Then $G=K(n_1, \dots, n_m; \lambda , \mu )$ is decomposable into Hamiltonian cycles and a single 1-factor if and only if the following conditions are satisfied:
\begin{itemize}
\item [\textup{(i)}] $n_i=n_j:=n$ for $1\leq i<j\leq m;$
\item [\textup{(ii)}]$\lambda (n-1)+\mu n(m-1)$ is an odd integer\textup{;} 
\item [\textup{(iii)}]$\lambda \leq  \mu n(m-1)$ if $n\geq 3$, and $\lambda-1 \leq 2\mu (m-1)$ otherwise.
\end{itemize}
\end{theorem}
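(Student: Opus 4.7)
The necessity of (i) follows from regularity, and of (ii) from the fact that the vertex degree equals $2r+1$, where $r$ is the number of Hamiltonian cycles. For (iii), fix a part $V_i$ and count intra-part edges there. In each Hamiltonian cycle the restriction to $V_i$ is a disjoint union of $k\geq 1$ paths covering all $n$ vertices of $V_i$, so it contains $n-k\leq n-1$ intra-part edges; the 1-factor contributes at most $\lfloor n/2\rfloor$ intra-part edges in $V_i$. Hence $\lambda\binom{n}{2}\leq r(n-1)+\lfloor n/2\rfloor$, and substituting $r=(\lambda(n-1)+\mu n(m-1)-1)/2$ and simplifying yields $\lambda\leq \mu n(m-1)$ when $n\geq 3$ and $\lambda-1\leq 2\mu(m-1)$ when $n=2$.

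For sufficiency, the plan is to use the amalgamation template established earlier in the paper. Form $H$ with $V(H)=\{u_1,\ldots,u_m\}$, put $\lambda\binom{n}{2}$ loops at each $u_i$ and $\mu n^2$ edges between each pair of distinct $u_i,u_j$, and set $\eta(u_i)=n$; then $G$ is an $\eta$-detachment of $H$. Let $r=(\lambda(n-1)+\mu n(m-1)-1)/2$. It suffices to produce an $(r+1)$-edge-coloring of $H$ in which $H(j)$ is a connected $2n$-regular spanning subgraph for $1\leq j\leq r$ and $H(r+1)$ is an $n$-regular spanning subgraph, because then Theorem \ref{mainth} yields a detachment in which each $G(j)$ with $j\leq r$ is 2-regular by (A2) and connected by (A7) (since $d_{H(j)}(u)/\eta(u)=2$ is even), hence a Hamiltonian cycle, while $G(r+1)$ is 1-regular and therefore a 1-factor.

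To build this coloring, I would first choose nonnegative integers $a_1,\ldots,a_r\leq n-1$ and $b\leq \lfloor n/2\rfloor$ summing to $\lambda\binom{n}{2}$; such a choice exists precisely when $\lambda\binom{n}{2}\leq r(n-1)+\lfloor n/2\rfloor$, which is equivalent to condition (iii) by the calculation used in the necessity direction. Place $a_j$ loops at each vertex in color $j$ and $b$ loops at each vertex in color $r+1$. Next, decompose the non-loop subgraph $\mu n^2 K_m$ of $H$ via Theorem \ref{knrkfac} into factors of degrees $(2n-2a_1,\ldots,2n-2a_r,n-2b)$: the degrees sum to $\mu n^2(m-1)$, each factor-degree times $m$ is even (the only nontrivial parity check is for $(n-2b)m$, which holds because condition (ii) forces $nm$ even), and the ``moreover'' clause guarantees that each even factor $2n-2a_j\geq 2$ can be made connected. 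Assign these non-loop edges their corresponding colors; since adding loops to a connected class preserves connectedness, the edge-coloring of $H$ satisfies the required properties.

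The principal difficulty is establishing the equivalence of condition (iii) with the existence of the additive decomposition $a_1+\cdots+a_r+b=\lambda\binom{n}{2}$ subject to the individual box-constraints $a_j\in\{0,\ldots,n-1\}$ and $b\in\{0,\ldots,\lfloor n/2\rfloor\}$; the bifurcation between $n\geq 3$ and $n=2$ in (iii) is precisely the arithmetic that controls whether $\lambda\binom{n}{2}$ lies in $[0, r(n-1)+\lfloor n/2\rfloor]$. All remaining steps---constructing the non-loop factorization, assigning loops, and invoking Theorem \ref{mainth}---follow the standard amalgamation blueprint already established in this paper.
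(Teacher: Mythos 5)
Your proof is correct, and while the necessity argument coincides with the paper's (regularity for (i), odd degree for (ii), and the same pure-edge count for (iii)), your sufficiency construction is genuinely different. The paper builds the $(k+1)$-edge-coloring of $H$ by taking a Walecki Hamiltonian decomposition of $H^*\cong\mu n^2K_m$ via Theorem \ref{knhdthm}, reserving $k$ of those cycles as connectivity seeds (one per color), placing a fixed number of loops ($n/2$ or $(n-1)/2$, depending on the parity of $n$) plus possibly the leftover $1$-factor of $H^*$ into color $k+1$, and then distributing all remaining edges among the $k$ colors with Hilton's evenly-equitable coloring (Theorem \ref{eveneq}); this forces a case split on the parity of $n$ and a separate ad hoc reduction for $n=2$ (adding a matching to a Hamiltonian decomposition of $K(2^{(m)};\lambda-1,\mu)$ obtained from Theorem \ref{hdka1apl1l2}). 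You instead prescribe the loop distribution $(a_1,\ldots,a_r,b)$ directly and obtain the non-loop part from the connected $(2n-2a_1,\ldots,2n-2a_r,n-2b)$-factorization supplied by Theorem \ref{knrkfac}; your parity checks are right ($nm$ is even because (ii) forces all degrees odd, hence $m$ even when $n$ is odd), the degree bookkeeping sums correctly to $\mu n^2(m-1)$, and the box-constrained partition $\sum a_j+b=\lambda\binom{n}{2}$ with $a_j\leq n-1$, $b\leq\lfloor n/2\rfloor$ exists exactly under condition (iii), which is indeed the same inequality as in the necessity count. What your route buys is uniformity: no parity case split and no special treatment of $n=2$. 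What it costs is reliance on the full strength of Johnson's connected-factorization result (the ``moreover'' clause of Theorem \ref{knrkfac}), where the paper gets by with the more elementary pair of Walecki's theorem and Hilton's evenly-equitable edge-coloring. For completeness you should still record, as the paper does, that (A3) and (A5) of Theorem \ref{mainth} give $m_{G}(u_i,u_{i'})=\lambda$ and $m_{G}(u_i,v_{i'})=\mu$ exactly, so that the detachment produced really is $K(n^{(m)};\lambda,\mu)$ and not merely some $\eta$-detachment of $H$.
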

\begin{proof}
Let $s=\sum\nolimits_{i=1}^{m}{n_i}$. To prove the necessity, suppose $G$ is Hamiltonian decomposable. For $v\in V_i$, $1\leq i\leq m$, we have $d_G(v)=\lambda(n_i-1)+\mu(s-n_i)$. Since $G$ is Hamiltonian decomposable, it is regular. So we have 
$\lambda(n_i-1)+\mu(s-n_i)=\lambda(n_j-1)+\mu(s-n_j)$ for  $1\leq i <  j\leq m$. It follows that  $n_i=n_j:=n$ for  $1\leq i <  j\leq m$. 
So we can assume that $G=K(n^{(m)};\lambda ,\mu )$. Therefore $d_G(v)=\lambda (n-1)+\mu n(m-1)$. Now since $G$ is decomposable into Hamiltonian cycles and a single 1-factor  
$$\lambda (n-1)+\mu n(m-1) \text {    is an odd integer}.$$

By the preceding paragraph, the number of Hamiltonian cycles of $G$ is $\big(\lambda (n-1)+\mu n(m-1)-1\big)/2$. Let us say that an edge is pure if both of its endpoints belong to the same part. Each Hamiltonian cycle passes through every vertex of every part exactly once. Hence each Hamiltonian cycle contains at most $n-1$ pure edges from each part. Since the total number of pure edges in each part is $\lambda \binom{n}{2}$, and a 1-factor contains at most $\lfloor a/2\rfloor$ pure edges from each part, we have 
$$\lambda \binom{n}{2} \leq \frac{(n-1)}{2}\big(\lambda (n-1)+\mu n(m-1)-1\big)+ \lfloor \frac{n}{2}\rfloor.$$
So, 
$$ \frac{\lambda n(n-1)}{2} \leq \frac{(n-1)}{2}\big(\lambda (n-1)+\mu n(m-1)-1\big)+ \lfloor \frac{n}{2}\rfloor.$$
Since $n>1$, it implies that 
$$\lambda n \leq \lambda (n-1)+\mu n(m-1)-1+\frac{2\lfloor \frac{n}{2}\rfloor}{n-1}.$$
It follows that if $n$ is odd, then we have $\lambda \leq \mu n(m-1)$, and if $n>2$ is even, then we have $\lambda \leq \mu n(m-1)+1/(n-1)$, which is equivalent to  $\lambda \leq \mu n(m-1)$. Moreover, if $n=2$, then we have  $\lambda-1 \leq 2 \mu (m-1)$. Therefore conditions (i)-(iii) are necessary. 

To prove the sufficiency, suppose conditions (i)-(iii) are satisfied. We first solve the special case of $n=2$. Since $\lambda+2\mu (m-1)$ is  odd, so is $\lambda$. Also $\lambda-1 \leq 2 \mu (m-1)$. Therefore, by Theorem \ref{hdka1apl1l2}, $K(2^{(m)};\lambda-1, \mu)$ is Hamiltonian decomposable. Adding an edge to each part of $K(2^{(m)};\lambda-1, \mu)$ (which is a 1-factor) will form $K(2^{(m)};\lambda, \mu)$. Thus we obtain a decomposition of $K(2^{(m)};\lambda, \mu)$ into Hamiltonian cycles and a single 1-factor. 
To prove the sufficiency for $n\geq 3$, let $H$ be a graph with $|V(H)|=m, \ell_H(y)=\lambda \binom{n}{2}$ for every $y\in V(H)$, and $m_H(y,z)=\mu n^2$ for every pair $y,z\in V(H)$ and let $\eta$ be a function from $V(H)$ into $\mathbb{N}$ with $\eta(y)=n$ for all $y\in V(H)$. Now define $k=\big(\lambda (n-1)+\mu n(m-1)-1\big)/2$. From (ii), $k$ is an integer. We note that $H$ is $(2k+1)n$-regular. In what follows we shall find an appropriate edge-coloring for $H$ and then we shall apply Theorem \ref{mainth}, to show that $H$ has an $\e$-detachment $G$ in which every color class except one induces a Hamiltonian cycle, the exceptional color class being a -factor. 

Let $H^*$ be the spanning subgraph of $H$ whose edges are the non-loop edges of $H$. It is easy to see that $H^*\cong \mu n^2K_m$. We shall find a $(k+1)$-edge-coloring for $H$. There are two cases to consider, but first we observe that: 
\begin{equation}\label{enoughcyc} \begin{split}
 (n-1)\big(\mu n(m-1)-\lambda\big)\geq 0 & \iff \\
  \mu n(m-1)(n-1)-\lambda (n-1)\geq 0 &\iff \\
    \mu n^2(m-1)-\lambda (n-1)-\mu n(m-1) \geq 0 & \iff \\
  \frac{\mu n^2(m-1)}{2} \geq  \frac{\lambda (n-1)+\mu n(m-1)}{2}.
  \end{split} \end{equation}

\begin{itemize}
\item {\bf Case 1: $n$ is even.}  It follows that $\mu n^2(m-1)$ is even and thus by Theorem \ref{knhdthm}, $H^*$ is decomposable into $\frac{\mu n^2(m-1)}{2}$ Hamiltonian cycles. Now since $n > 1$, and since by (iii) $\mu n(m-1)\geq \lambda $, by (\ref{enoughcyc}) it follows that the number of Hamiltonian cycles in $H^*$ is greater than $k$. Now let $\mathcal{C}_1,\ldots, \mathcal{C}_k$ be $k$ arbitrary Hamilton cycles of a Hamiltonian decomposition of $H^*$. Let $\mathcal{K}^*$ be a (partial) $k$-edge-coloring of $H^*$ such that all edges of each cycle $\mathcal{C}_i$ are colored $i$, for each $1\leq i\leq k$. Now let ${\mathcal L}$ be a spanning subgraph of $H$ in which every vertex is incident with $n/2$ loops (observe that $\lambda \binom{n}{2}\geq n/2$); so the graph ${\mathcal L}$ consists only of loops. Now let $H^{**}$ be the spanning subgraph of $H$ whose edges are all edges in $E(H)\backslash E({\mathcal L})$ that are uncolored in $H^*$. Recall that $H$ is $(2k+1)n$-regular, so for each $v\in V(H^{**})$ we have $d_{H^{**}}(v)=(2k+1)n-2k-2(n/2)=2k(n-1)$. Therefore $H^{**}$ is an even graph and so by Theorem \ref{eveneq} it has an evenly-equitable edge-coloring $\mathcal{K}^{**}$ with $k$ colors $1,\ldots,k$ (Note that we are using the same colors we used to color edges of $H^*$). Therefore for each $j$, $1\leq j\leq k$, and for each $ y\in V(H^{**})$, we have $d_{H^{**}(j)}(y)=2(n-1)k/k=2(n-1)$. Now we can define the $(k+1)$-edges coloring $\mathcal{K}$ for $H$ as below:
$$
\mathcal{K}(e) : = \left \{ \begin{array}{ll}
\mathcal{K}^*(e) & \mbox { if } e\in E(H^*)\backslash E(H^{**}),\\
\mathcal{K}^{**}(e) & \mbox { if } e\in E(H^{**}),\\
k+1  & \mbox { if } e\in E({\mathcal L}).\\
\end{array} \right.
$$
So for each $y\in V(H)$,
$$
d_{H(j)}(y)= \left \{ \begin{array}{ll}
2(n-1)+2=2n & \mbox { if } 1\leq j\leq k,\\
2(n/2)=n & \mbox { if } j=k+1.\\
\end{array} \right. 
$$
\item {\bf Case 2: $n$ is odd.} Since $\lambda (n-1)$ is even, and by (ii), $\lambda (n-1)+\mu n(m-1)$ is odd, it follows that $\mu n(m-1)$ is odd. So $\mu n^2(m-1)$ is odd. Thus by Theorem \ref{knhdthm}, $H^*$ is decomposable into $\big(\mu n^2(m-1)-1\big)/2$ Hamiltonian cycles and a single $1$-factor $F$. 

By \eqref{enoughcyc}, it follows that  $$\frac{\mu n^2(m-1)-1}{2}\geq \frac{\lambda (n-1)+\mu n(m-1)-1}{2}=k.$$
Hence, the number of Hamiltonian cycles in $H^*$ is at least $k$. Now let $\mathcal{C}_1,\ldots, \mathcal{C}_k$ be $k$ arbitrary Hamilton cycles of a Hamiltonian decomposition of $H^*$. Let $\mathcal{K}^*$ be a (partial) $k$-edge-coloring of $H^*$ such that all edges of each cycle $\mathcal{C}_i$ are colored $i$, for each $1\leq i\leq k$, and the single 1-factor $F$ is colored $k+1$. Now let ${\mathcal L}$ be a spanning subgraph of $H$ in which every vertex is incident with $(n-1)/2$ loops (observe that $\lambda \binom{n}{2}\geq (n-1)/2$). Now let $H^{**}$ be the spanning subgraph of $H$ whose edges are all edges in $E(H)\backslash E({\mathcal L})$ that are uncolored in $H^*$. Recall that $H$ is $(2k+1)n$-regular, so for each $v\in V(H^{**})$ we have $d_{H^{**}}(v)=(2k+1)n-2k-1-2(n-1)/2=2k(n-1)$. Therefore $H^{**}$ is an even graph and so by Theorem \ref{eveneq} it has an evenly-equitable edge-coloring $\mathcal{K}^{**}$ with $k$ colors $1,\ldots,k$ (Note that we are using the same colors we used to color edges of $H^*$). Therefore for each $j$, $1\leq j\leq k$, and for each $ y\in V(H^{**})$, we have $d_{H^{**}(j)}(y)=2(n-1)k/k=2(n-1)$. Now we can define the $(k+1)$-edges coloring $\mathcal{K}$ for $H$ as below:
$$
\mathcal{K}(e) : = \left \{ \begin{array}{ll}
\mathcal{K}^*(e) & \mbox { if } e\in E(H^*)\backslash E(H^{**}),\\
\mathcal{K}^{**}(e) & \mbox { if } e\in E(H^{**}),\\
k+1  & \mbox { if } e\in E({\mathcal L}).\\
\end{array} \right.
$$
So for each $y\in V(H)$,
$$
d_{H(j)}(y)= \left \{ \begin{array}{ll}
2(n-1)+2=2n & \mbox { if } 1\leq j\leq k,\\
1+2(n-1)/2=n & \mbox { if } j=k+1.\\
\end{array} \right. 
$$
\end{itemize}
Note that since all edges of each Hamiltonian cycle $\mathcal{C}_j$ are colored $j$, $1\leq j \leq k$, each color class $H(j)$ is connected for $1\leq j \leq k$.
Therefore in both cases, we have a $(k+1)$-edge-colored graph $H$ for which, for each $y,z\in V(H), y\neq z$, $\eta(y)=n\geq 2$, $\ell_H(y)=\lambda \binom{n}{2}$, $m_H(y,z)=\mu n^2$, $d_H(y)=(2k+1)n$, $\omega(H(j))=1$ for each $1\leq j\leq k$, and 
$$
d_{H(j)}(y)= \left \{ \begin{array}{ll}
2n & \mbox { if } 1\leq j\leq k,\\
n & \mbox { if } j=k+1.\\
\end{array} \right. 
$$
Now by Theorem \ref{mainth}, there exists a loopless $\e$-detachment $G^*$ of $H$ in which each $v\in V(H)$ is detached into $v_1,\ldots, v_{\eta(v)}$ such that for each $u,v\in V(H), u\neq v$ the following conditions are satisfied:
\begin{itemize}
  \item $m_{G^*}(u_i,u_{i'})=\lambda \binom{n}{2}/\binom{n}{2}=\lambda $ for  $1\leq i<i'\leq \eta(u)$;
  \item $m_{G^*}(u_i,v_{i'})=\mu n^2/(nn)=\mu $ for $1\leq i\leq \eta(u)$ and $1\leq i'\leq \eta(v)$;
  \item $d_{G^*(j)}(u_i)= \left \{ \begin{array}{ll}
2n/n=2 & \mbox { if } 1\leq j\leq k,\\
n/n=1 & \mbox { if } j=k+1,\\
\end{array} \right.$ for $1\leq i\leq \eta(u)$;
  \item $\omega(G^*(j))=\omega(H(j))=1$ for each $1\leq j\leq k$, since $d_{H(j)}(u)/\eta(u)=2n/n=2$ for $1\leq j\leq k$. 
\end{itemize} 
From the first two conditions it follows that $G\cong K(n^{(m)};\lambda ,\mu )$. The last two conditions tells us that each color class $1\leq j\leq k$ is $2$-regular and connected respectively; that is each color class $1\leq j\leq k$ is a Hamiltonian cycle. Furthermore, the color class $k+1$ is 1-regular. So we obtained a decomposition of $K(n^{(m)};\lambda ,\mu )$ into Hamiltonian cycles and a single 1-factor. 
\end{proof}

\end{document}